\definecolor{my-linkcolor}{rgb}{0.75,0,0}
\definecolor{my-citecolor}{rgb}{0.1,0.57,0}
\definecolor{my-urlcolor}{rgb}{0,0,0.75}
\title[Vertex algebraic intertwining operators]{Vertex algebraic intertwining operators among generalized Verma modules for affine Lie algebras}
\author{Robert McRae}
\date{}
\address{Yau Mathematical Sciences Center, Tsinghua University, Beijing 100084, China}
\curraddr{}
\email{rhmcrae@tsinghua.edu.cn}
\subjclass[2010]{Primary 17B67, 17B69, 81R10}
\keywords{Affine Lie algebras, vertex operator algebras, intertwining operators}
    \theoremstyle{definition}\newtheorem{rema}{Remark}[section]
    \theoremstyle{plain}\newtheorem{propo}[rema]{Proposition}
    \newtheorem{theo}[rema]{Theorem}
    \theoremstyle{plain}
    \theoremstyle{definition}\newtheorem{defi}[rema]{Definition}
    \theoremstyle{plain}\newtheorem{lemma}[rema]{Lemma}
    \theoremstyle{definition}\newtheorem{exam}[rema]{Example}
    \theoremstyle{definition}
\begin{document}
\bibliographystyle{alpha}

\newcommand{\nordcirc}{\mbox{\small $\genfrac{}{}{0pt}{}{\circ}{\circ}$}}
\newcommand{\N}{\mathbb{N}}
\newcommand{\Z}{\mathbb{Z}}
\newcommand{\Q}{\mathbb{Q}}
\newcommand{\R}{\mathbb{R}}
\newcommand{\C}{\mathbb{C}}
\newcommand{\gvmu}{V_{\mathfrak{g}}(\ell,U)}
\newcommand{\imu}{L_{\mathfrak{g}}(\ell,U)}
\newcommand{\gvmzero}{V_{\mathfrak{g}}(\ell,0)}
\newcommand{\imzero}{L_{\mathfrak{g}}(\ell,0)}
\newcommand{\gvmlambda}{V_{\mathfrak{g}}(\ell,\lambda)}
\newcommand{\imlambda}{L_{\mathfrak{g}}(\ell,\lambda)}
\newcommand{\Wo}{W^{(1)}}
\newcommand{\Wt}{W^{(2)}}
\newcommand{\Wth}{W^{(3)}}
\newcommand{\Wi}{W^{(i)}}
\newcommand{\wo}{w_{(1)}}
\newcommand{\wt}{w_{(2)}}
\newcommand{\wth}{w_{(3)}}
\newcommand{\wi}{w_{(i)}}
\newcommand{\g}{\mathfrak{g}}
\newcommand{\ghat}{\widehat{\mathfrak{g}}}
\newcommand{\ilambdaone}{L_{\mathfrak{g}}(\ell,\lambda_1)}
\newcommand{\ilambdatwo}{L_{\mathfrak{g}}(\ell,\lambda_2)}
\newcommand{\ilambdathree}{L_{\mathfrak{g}}(\ell,\lambda_3)}
\newcommand{\T}{T^{(\ell)}_{\lambda_1, \lambda_2}}
\newcommand{\ittensprod}{(W_1\boxtimes_{P(z_1-z_2)} W_2)\boxtimes_{P(z_2)} W_3}
\newcommand{\prodtensprod}{W_1\boxtimes_{P(z_1)} (W_2\boxtimes_{P(z_2)} W_3)}
\newcommand{\gvmone}{V_{\mathfrak{g}}(\ell,U_1)}
\newcommand{\gvmtwo}{V_{\mathfrak{g}}(\ell,U_2)}
\newcommand{\gvmthree}{V_{\mathfrak{g}}(\ell,U_3)}
\newcommand{\gvmp}{V_{\mathfrak{sl}_2}(\ell,p)}
\newcommand{\gvmq}{V_{\mathfrak{sl}_2}(\ell,q)}
\newcommand{\gvmr}{V_{\mathfrak{sl}_2}(\ell,r)}
\newcommand{\res}{\mathrm{Res}}
\newcommand{\nn}{\nonumber\\}
\newcommand{\Id}{\mathrm{Id}}

\begin{abstract}
 We find sufficient conditions for the construction of vertex algebraic intertwining operators, among generalized Verma modules for an affine Lie algebra $\widehat{\mathfrak{g}}$, from $\mathfrak{g}$-module homomorphisms. When $\mathfrak{g}=\mathfrak{sl}_2$, these results extend previous joint work with J. Yang, but the method used here is different. Here, we construct intertwining operators by solving Knizhnik-Zamolodchikov equations for three-point correlation functions associated to $\widehat{\mathfrak{g}}$, and we identify obstructions to the construction arising from the possible non-existence of series solutions having a prescribed form.
\end{abstract}

\maketitle

\numberwithin{equation}{section}

\section{Introduction}

This paper extends the results of \cite{MY} on intertwining operators among generalized Verma modules for $\widehat{\mathfrak{sl}}_2$ to any (untwisted) affine Lie algebra $\ghat$, where $\g$ is a finite-dimensional simple Lie algebra over $\C$. For any level $\ell\neq -h^\vee$, where $h^\vee$ is the dual Coxeter number of $\g$, the generalized Verma $\ghat$-module $\gvmzero$, induced from the one-dimensional $\g$-module and on which the canonical central element of $\ghat$ acts by $\ell$, is a vertex operator algebra \cite{FZ}. Then any generalized Verma module $\gvmu$ induced from a finite-dimensional $\g$-module $U$ is a $\gvmzero$-module; more generally, $\gvmu$ is an $\N$-gradable weak $\gvmzero$-module if $U$ is infinite dimensional.

Intertwining operators among a triple of modules for a vertex operator algebra $V$ are fundamental in the study of tensor categories of $V$-modules (see the review article \cite{HL2}). Indeed, the tensor product of $V$-modules $W_1$ and $W_2$ (if it exists) is the $V$-module $W_1\boxtimes W_2$ such that $\mathrm{Hom}_V(W_1\boxtimes W_2, W_3)$ is naturally isomorphic to the space $\mathcal{V}^{W_3}_{W_1\,W_2}$ of intertwining operators of type $\binom{W_3}{W_1\,W_2}$ for any $V$-module $W_3$. In particular, the fusion rule $\mathcal{N}^{W_3}_{W_1, W_2}=\dim\mathcal{V}^{W_3}_{W_1\,W_2}$ is the number of linearly independent homomorphisms from $W_1\boxtimes W_2$ to $W_3$. While it can be hard to determine when a category of $V$-modules closes under tensor products, a result of Miyamoto \cite{Mi} shows that two $V$-modules satisfying the $C_1$-cofiniteness condition have a $C_1$-cofinite tensor product. For $V=\gvmzero$, generalized Verma modules induced from finite-dimensional $\g$-modules are $C_1$-cofinite, so their tensor products do exist.

Beyond $C_1$-cofinite modules, it is expected that affine Lie (super)algebras at admissible rational levels should admit braided tensor categories of modules that include relaxed highest-weight modules. These $\ghat$-modules are quotients of generalized Verma modules induced from possibly infinite-dimensional irreducible weight $\g$-modules, so they are $\mathbb{N}$-gradable but may have infinite-dimensional conformal weight spaces. In \cite{Ri, CR1, CR2}, Ridout and Creutzig developed a method for predicting tensor products of relaxed highest-weight modules and their spectral flows, using a Verlinde formula based on characters. But rigorous intertwining operator constructions are lacking, except for some examples in the simplest case $\g=\mathfrak{sl}_2$ \cite{Ad}.

Here, we develop a construction method for intertwining operators that we expect will be useful for understanding $\ghat$-module categories at any level $\ell\neq-h^\vee$, including admissible ones. We focus on intertwining operators among $\ghat$-modules of type $\binom{W_3}{\gvmone\,\gvmtwo}$ where $W_3=\bigoplus_{n\in\N} W_3(n)$ is $\N$-gradable and $U_1$, $U_2$, and $W_3(0)$ are, for example, irreducible weight $\g$-modules with finite-dimensional weight spaces. The construction works under conditions that are easy to check in many examples: in Section \ref{sec:sl2} we derive fusion rules for some $\widehat{\mathfrak{sl}}_2$-modules induced from both finite- and infinite-dimensional $\mathfrak{sl}_2$-modules. 

To motivate the main result, we note that a first guess for the tensor product (if it exists) of generalized Verma modules $\gvmone$ and $\gvmtwo$ might be $V_\g(\ell,U_1\otimes U_2)$. By the universal property of induced $\ghat$-modules, this would imply that any $\g$-homomorphism $U_1\otimes U_2\rightarrow U_3$ for $U_3$ a suitable $\g$-module induces an intertwining operator of type $\binom{\gvmthree}{\gvmone\,\gvmtwo}$. However, our results show that the reality is more interesting: we only get intertwining operators from $\g$-module homomorphisms under certain conditions which at least sometimes are necessary. For example, here is a version of the main Theorem \ref{maintheorem2}; in the statement, $h_3$ is the conformal weight of $W_3(0)$ and $C_{U_1\otimes U_2}$ is a Casimir operator on $U_1\otimes U_2$:
\begin{theo}\label{thm:main_theo}
Suppose $W_3=\bigoplus_{n\in\N} W_3(n)$ is an $\N$-gradable weak $\gvmzero$-module and $U_1$, $U_2$, $W_3(0)$ are irreducible weight $\g$-modules with finite-dimensional weight spaces. Then there is a linear isomorphism
 \begin{equation*}
  \mathcal{V}^{W_3}_{\gvmone\,\gvmtwo}\rightarrow\mathrm{Hom}_\g(U_1\otimes U_2,W_3(0))
 \end{equation*}
provided that $(\ell+h^\vee)(h_3+N)-\frac{1}{2} C_{U_1\otimes U_2}$ is invertible on $U_1\otimes U_2$ for all $N\in\Z_+$.
\end{theo}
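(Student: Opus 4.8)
The plan is to exploit the fact that $\gvmone$ and $\gvmtwo$ are induced from their lowest conformal weight spaces $U_1=W_1(0)$ and $U_2=W_2(0)$, so that an intertwining operator is rigidly controlled by its values on these spaces. First I would define the candidate map $\Phi\colon\mathcal{V}^{W_3}_{\gvmone\,\gvmtwo}\to\mathrm{Hom}_\g(U_1\otimes U_2,W_3(0))$. For $\mathcal{Y}$ in the source and $u_1\in U_1$, $u_2\in U_2$, the $L(0)$-grading compatibility of an intertwining operator forces
\[
 \mathcal{Y}(u_1,z)u_2=\sum_{N\in\N}\psi_N(u_1\otimes u_2)\,z^{h_3+N-h_1-h_2},\qquad \psi_N\colon U_1\otimes U_2\to W_3(N),
\]
and I set $\Phi(\mathcal{Y})=\psi_0$. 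That $\psi_0$ is a $\g$-homomorphism follows from the commutator formula between the degree-zero current modes $a(0)$ and $\mathcal{Y}$, together with the fact that these modes preserve $U_1$, $U_2$, and $W_3(0)$.

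The heart of both injectivity and surjectivity is a recursion for the $\psi_N$. Applying the $L(-1)$-derivative property, inserting the Sugawara expression $L(-1)u_1=\tfrac{1}{\ell+h^\vee}\sum_i a_i(-1)a^i(0)u_1$ valid on the lowest weight space, and using the iterate formula to pull the negative current modes $a_i(-1)$ out of $\mathcal{Y}$ (the positive modes annihilate $u_2$), I would compare coefficients of each power of $z$ — equivalently, project onto each graded piece $W_3(N)$ — to obtain
\[
 \psi_N\!\left(\big[(\ell+h^\vee)(h_3+N)-\tfrac12 C_{U_1\otimes U_2}\big](u_1\otimes u_2)\right)=\sum_{m=-N}^{-1}\sum_i a_i(m)\,\psi_{N+m}\big(a^i(0)u_1\otimes u_2\big).
\]
Here I use $h_j=\tfrac{1}{2(\ell+h^\vee)}C_{U_j}$ on the irreducible $U_j$ and the split-Casimir identity $2\sum_i a^i(0)\otimes a_i(0)=C_{U_1\otimes U_2}-C_{U_1}-C_{U_2}$ to recognize the operator on the left as exactly $\mathcal{O}_N:=(\ell+h^\vee)(h_3+N)-\tfrac12 C_{U_1\otimes U_2}$ from the hypothesis. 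This is the three-point Knizhnik--Zamolodchikov equation in coefficient form.

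Injectivity of $\Phi$ is then immediate: for each $N\geq 1$ the hypothesis makes $\mathcal{O}_N$ invertible, so the recursion determines $\psi_N$ uniquely from $\psi_0,\dots,\psi_{N-1}$, whence every $\psi_N$ is determined by $\psi_0$; since $\gvmone$ is generated by $U_1$ and $\gvmtwo$ by $U_2$ under $\ghat$, the commutator and iterate formulas propagate $\mathcal{Y}|_{U_1\otimes U_2}$ to all of $\gvmone\otimes\gvmtwo$, so $\mathcal{Y}$ is determined by $\Phi(\mathcal{Y})$. For surjectivity I would start from an arbitrary $\varphi\in\mathrm{Hom}_\g(U_1\otimes U_2,W_3(0))$, set $\psi_0=\varphi$, and generate all higher $\psi_N$ from the recursion using the invertibility of $\mathcal{O}_N$. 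The $N=0$ instance $\psi_0\circ\mathcal{O}_0=0$ holds automatically: as $W_3(0)$ is irreducible with $C_{W_3(0)}=2(\ell+h^\vee)h_3$, $\g$-equivariance of $\varphi$ gives $\varphi\circ C_{U_1\otimes U_2}=2(\ell+h^\vee)h_3\,\varphi$. Assembling $\mathcal{Y}(u_1,z)u_2:=\sum_N\psi_N(u_1\otimes u_2)z^{h_3+N-h_1-h_2}$ and extending by the iterate and commutator formulas then produces the candidate operator with $\Phi(\mathcal{Y})=\varphi$.

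The main obstacle is the final verification that this reconstructed object is a genuine intertwining operator, that is, that it satisfies lower truncation and the full Jacobi identity and not merely the $L(-1)$-derivative relation used to derive the recursion. I would address this by showing that invertibility of every $\mathcal{O}_N$ rules out resonances, so that the prescribed power-series form (with no logarithmic terms) is the unique solution of the KZ system, and then invoking the correlation-function/duality characterization of intertwining maps to promote this consistent system of series to an honest intertwining operator. It is exactly here that failure of invertibility would obstruct the existence of a solution of the required form, as anticipated in the introduction.
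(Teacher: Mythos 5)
Your setup is correct and matches the paper's strategy: the map $\mathcal{Y}\mapsto\psi_0$, the component-form KZ recursion
\[
 \psi_N\bigl(\bigl[(\ell+h^\vee)(h_3+N)-\tfrac12 C_{U_1\otimes U_2}\bigr](u_1\otimes u_2)\bigr)=\sum_{i}\sum_{k=1}^{N}\gamma_i(-k)\,\psi_{N-k}(\gamma_i\cdot u_1\otimes u_2),
\]
and the injectivity argument are all exactly what the paper does (the paper records the identity $(\ell+h^\vee)(h_3+N)-\tfrac12 C_{U_1\otimes U_2}=(\ell+h^\vee)(h+N)-C_{U_1,U_2}$ and quotes an extension theorem from \cite{MY} for the uniqueness of the extension from $U_1\otimes U_2$ to the full generalized Verma modules). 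The verification that $\psi_0\circ\mathcal{O}_0=0$ via $C_{W_3(0)}=2(\ell+h^\vee)h_3$ is also the paper's argument.

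However, there is a genuine gap in the surjectivity half, and you have misdiagnosed where it lies. The extension theorem you need (Theorem \ref{intwopext}, from \cite{MY}) takes as \emph{hypothesis} that the map $U_1\otimes U_2\to W_3\{x\}$ satisfies the commutator relation $[g(n),\mathcal{Y}(u,x)]=x^n\mathcal{Y}(g(0)u,x)$ for all $g\in\g$ and all $n\geq 0$, i.e.\ in components $g(n)\psi_N(u_1\otimes u_2)=\psi_{N-n}(g(0)u_1\otimes u_2)+\psi_N(u_1\otimes g(n)u_2)$. The KZ recursion is only a \emph{necessary} condition extracted from the $L(-1)$-derivative property; defining the $\psi_N$ by inverting $\mathcal{O}_N$ gives a well-defined series with no resonance or logarithm issues, but it does not automatically produce a map compatible with the action of $g(n)$ for $n\geq 0$. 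Proving that the recursively defined $\psi_N$ do satisfy this relation is the bulk of the paper's proof: a nontrivial induction on $N$ splitting into the cases $n=0$, $1\leq n\leq N-1$, and $n=N$, relying on the affine commutation relations and on the Lie-algebra identities $\sum_i[g,\gamma_i]\otimes\gamma_i=-\sum_i\gamma_i\otimes[g,\gamma_i]$ and $\sum_i[g,\gamma_i]\gamma_i=h^\vee g$ (this is where the dual Coxeter number enters and where the central term $n\ell\langle g,\gamma_i\rangle$ must cancel). Your proposed remedy --- ruling out logarithmic solutions and then ``invoking the correlation-function/duality characterization'' --- does not address this: uniqueness of the series solution is not in question, and any duality-based promotion to an honest intertwining operator would still require establishing the $\widehat{\g}$-equivariance of the coefficients, which is precisely the missing verification.
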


In Section \ref{sec:obstructions}, we will consider whether non-invertibility of $(\ell+h^\vee)(h_3+N)-\frac{1}{2} C_{U_1\otimes U_2}$ truly obstructs the existence of intertwining operators. In fact there is no obstruction if $W_3$ is the contragredient of a generalized Verma module, but we already showed in \cite{MY} that if $W_3$ is a generalized Verma module for $\widehat{\mathfrak{sl}}_2$, there can be obstructions arising from singular vectors in $W_3$. Here, the proof of Theorem \ref{thm:main_theo} yields a construction of candidates for singular vectors in $W_3$ if $(\ell+h^\vee)(h_3+N)-\frac{1}{2} C_{U_1\otimes U_2}$ is non-invertible for some $N\in\Z_+$ (see Theorem \ref{thm:sing_vect_candidate}).

The proof of Theorem \ref{thm:main_theo} is very different from that of the similar \cite[Theorem 6.1]{MY} (where $\g=\mathfrak{sl}_2$). In \cite{MY}, we adapted the method of \cite{Li2}, for constructing intertwining operators of type $\binom{V_\g(\ell,U_3)'}{\gvmone\,\gvmtwo}$ where the third module is the contragredient of a generalized Verma module, to the case of three generalized Verma modules. But for this to work we had to assume that the third module $\gvmthree$ was not too different from a contragredient (specifically, we assumed $\gvmthree$ had a composition series of length $2$). The method used here is better for modules that are generated by their lowest conformal weight spaces, such as generalized Verma modules. The key observation is that the $L(-1)$-derivative property for intertwining operators implies the restriction of an intertwining operator of type $\binom{W_3}{\gvmone\,\gvmtwo}$ to $U_1\otimes U_2$ satisfies a differential equation, essentially a Knizhnik-Zamolodchikov (KZ) equation for three-point correlation functions. Thus, we first try to solve the KZ equation with a series solution ansatz to obtain a linear map
\begin{equation*}
 \mathcal{Y}: U_1\otimes U_2\rightarrow W_3\lbrace x\rbrace.
\end{equation*}
Potential obstructions arise when the series coefficients cannot be computed recursively from the initial data of a $\g$-module homomorphism $U_1\otimes U_2\rightarrow W_3(0)$, but if $\mathcal{Y}$ can be constructed, it uniquely extends to an intertwining operator exactly as in \cite{MY}.

We anticipate that it will be possible to extend Theorem \ref{thm:main_theo} in several directions. First, it is often important to determine all intertwining operators among irreducible $\ghat$-modules of type $\binom{L_{\g}(\ell,U_3)}{L_{\g}(\ell,U_1)\,L_{\g}(\ell, U_2)}$, where each $L_{\g}(\ell,U_i)$ is the unique simple quotient of $V_{\g}(\ell,U_i)$ for an irreducible $\g$-module $U_i$. This amounts to determining when intertwining operators of type $\binom{L_{\g}(\ell,U_3)}{V_{\g}(\ell,U_1)\,V_{\g}(\ell, U_2)}$, constructed as in Theorem \ref{thm:main_theo}, remain well defined on the irreducible quotients; for an example of a result of this type in the case $\g=\mathfrak{sl}_2$, see \cite[Theorem 7.1]{MY}. A second problem is constructing intertwining operators among non-$\mathbb{N}$-gradable $\gvmzero$-modules, especially spectral flows of relaxed highest-weight modules at admissible levels. Although the proof of Theorem \ref{thm:main_theo} is not well-suited to handling modules that are not $\mathbb{N}$-gradable, it is sometimes possible to compute fusion rules among spectral flow modules by relating them to intertwining operators among $\mathbb{N}$-gradable modules: for example, this was done for the $\beta\gamma$-ghost vertex algebra in \cite{AP}.

Finally, Theorem \ref{thm:main_theo} extends almost immediately to intertwining operators among untwisted modules for affine Lie superalgebras, using the corresponding KZ equations for superalgebras. However, in the superalgebra setting it is natural to also consider twisted intertwining operators among parity-twisted modules (the Ramond sector in physics terminology). For example, when $\gvmzero$ is the symplectic fermion vertex operator superalgebra associated to an abelian Lie superalgebra $\g$, a classification of twisted intertwining operators will be useful for rigorously proving the formulas in \cite[Table 1]{Ru} for four-point correlation functions involving Ramond $\gvmzero$-modules. As proving construction theorems for twisted intertwining operators will first require generalizing Theorem \ref{intwopext} below  (which is based on commutativity properties specific to untwisted intertwining operators), we will study affine Lie superalgebras in future work.

We now summarize the remaining contents of this paper. In Section \ref{sec:affine}, we recall definitions and notation for affine Lie algebras. In Section \ref{sec:intw_op}, we recall the definition of intertwining operator and prove our main construction theorems for intertwining operators among $\gvmzero$-modules. In Section \ref{sec:obstructions}, we treat the question of when the conditions of Theorem \ref{thm:main_theo} are necessary. Finally in Section \ref{sec:sl2}, we present new examples of intertwining operators when $\g=\mathfrak{sl}_2$ and compare with previous results from \cite{MY}.

\section{Affine Lie algebras}\label{sec:affine}

Let $\g$ be a finite-dimensional simple Lie algebra over $\C$ with non-zero invariant bilinear form $\langle\cdot,\cdot\rangle$ scaled so that long roots have square length $2$. Then the affine Lie algebra is
\begin{equation*}
 \ghat=\g\otimes\C[t,t^{-1}]\oplus\C\mathbf{k}
\end{equation*}
with $\mathbf{k}$ central and all other brackets determined by
\begin{equation}\label{affalgcomm}
 [g\otimes t^m, h\otimes t^n]=[g,h]\otimes t^{m+n}+m\langle g,h\rangle\delta_{m+n,0}\mathbf{k}
\end{equation}
for $g,h\in\g$ and $m,n\in\Z$. The Lie algebra $\ghat$ has the decomposition
\begin{equation*}
 \ghat=\ghat_-\oplus\ghat_0\oplus\ghat_+
\end{equation*}
where
\begin{equation*}
 \ghat_\pm=\bigoplus_{n\in\pm\Z_+}\g\otimes t^n,\hspace{4em}\ghat_0=\g\otimes t^0\oplus\C\mathbf{k}.
\end{equation*}
For $g\in\g$ and $m\in\Z$, we will use $g(m)$ to denote the action of $g\otimes t^m$ on a $\ghat$-module.

If $U$ is a $\g$-module, then $U$ becomes a $\ghat_0\oplus\ghat_+$-module on which $\ghat_+$ acts trivially and $\mathbf{k}$ acts as some scalar $\ell\in\C$.
The generalized Verma $\ghat$-module is then the induced module
\begin{equation*}
 \gvmu=U(\ghat)\otimes_{U(\ghat_0\oplus\ghat_+)} U.
\end{equation*}
We say that $\ell$ is the \textit{level} of $\gvmu$. Since the Poincar\'{e}-Birkhoff-Witt Theorem implies
\begin{equation*}
 \gvmu\cong U(\ghat_-)\otimes_{\C} U
\end{equation*}
as vector spaces, $\gvmu$ is spanned by vectors of the form
\begin{equation*}
 g_1(-n_1)\cdots g_k(-n_k)u
\end{equation*}
for $g_i\in\g$, $n_i\in\Z_+$, and $u\in U$.

\begin{rema}
 If $\lambda$ is a weight of $\g$ and $L_\lambda$ is the associated irreducible highest-weight $\g$-module, we use $\gvmlambda$ to denote the generalized Verma module induced from $L_\lambda$. In particular, $\gvmzero$ denotes the generalized Verma module induced from the one-dimensional $\g$-module $\C\mathbf{1}$.
\end{rema}

For any level $\ell$, $\gvmzero$ is a vertex algebra with vacuum $\mathbf{1}$ \cite{FZ} (see also \cite[Section 6.2]{LL}). The vertex algebra $\gvmzero$ is generated by the vectors $g(-1)\mathbf{1}$ for $g\in\g$, with vertex operators
\begin{equation*}
 Y(g(-1)\mathbf{1},x)=g(x)=\sum_{n\in\Z} g(n)\,x^{-n-1}.
\end{equation*}
Moreover, the same vertex operators acting on any generalized Verma module $\gvmu$ give it the structure of an $\N$-gradable weak $\gvmzero$-module: $\gvmu=\bigoplus_{n\in\N} \gvmu(n)$ for
\begin{equation*}
 \gvmu(n)=\mathrm{span}\lbrace g_1(-n_1)\cdots g_k(-n_k)u\,\vert\,u\in U, g_i\in\g, n_i\in\Z_+, n_1+\ldots+n_k=n\rbrace.
\end{equation*}

Let $\lbrace \gamma_i\rbrace_{i=1}^{\mathrm{dim}\,\g}$ be an orthonormal basis for $\g$ with respect to the nondegenerate form $\langle\cdot,\cdot\rangle$. The Casimir element $\sum_{i=1}^{\mathrm{dim}\,\g} \gamma_i^2$ associated to $\langle\cdot,\cdot\rangle$ acts in the adjoint representation $\g$ by $2 h^\vee$, where $h^\vee$ is the dual Coxeter number of $\g$. Then if $\ell\neq -h^\vee$, $\gvmzero$ is a vertex operator algebra with conformal vector
\begin{equation*}
 \omega=\dfrac{1}{2(\ell+h^\vee)}\sum_{i=1}^{\mathrm{dim}\,\g} \gamma_i(-1)^2\mathbf{1}.
\end{equation*}
Writing $Y(\omega,x)=\sum_{n\in\Z} L(n)\,x^{-n-2}$ as usual, we have (see \cite[Theorem 6.2.16]{LL}) that
\begin{equation}\label{Lgcomm}
 [L(m),g(n)]=-n g(m+n)
\end{equation}
for any $g\in\g$ and $m,n\in\Z$. From the definition of $\omega$, it also follows that
\begin{equation}\label{L0}
 L(0)=\dfrac{1}{2(\ell+h^\vee)}\sum_{i=1}^{\mathrm{dim}\,\g} \gamma_i(0)^2+\dfrac{1}{\ell+h^\vee}\sum_{i=1}^{\mathrm{dim}\,\g}\sum_{n>0} \gamma_i(-n) \gamma_i(n)
\end{equation}
and
\begin{equation}\label{L-1}
 L(-1)=\dfrac{1}{\ell+h^\vee}\sum_{i=1}^{\mathrm{dim}\,\g}\sum_{n\geq 0} \gamma_i(-n-1)\gamma_i(n).
\end{equation}
\begin{rema}\label{geneigenU}
By \eqref{Lgcomm} and \eqref{L0}, any vector of the form
\begin{equation*}
 g_1(-n_1)\cdots g_k(-n_k)\mathbf{1}\in\gvmzero
\end{equation*}
for $g_i\in\g$ and $n_i\in\Z_+$ has conformal weight $n_1+\ldots+n_k$. More generally, for any weight $\lambda$, \eqref{L0} shows that $L(0)$ acts on $L_\lambda=\gvmlambda(0)$ by the scalar
 \begin{equation}\label{hlambdal}
  h_{\lambda,\ell}=\dfrac{1}{2(\ell+h^\vee)}\langle\lambda,\lambda+2\rho\rangle
 \end{equation}
where $\rho$ is the sum of the fundamental weights of $\g$ (see \cite[Section 22]{Hu}). Then by \eqref{Lgcomm}, $\gvmlambda(n)$ is the conformal weight space of $\gvmlambda$ with $L(0)$-eigenvalue $h_{\lambda,\ell}+n$.
\end{rema}
\begin{rema}
 The $m=n=0$ case of \eqref{Lgcomm} implies that the $L(0)$-generalized eigenspaces of a weak $\gvmzero$-module are $\g$-modules.
\end{rema}

\section{Construction of intertwining operators}\label{sec:intw_op}

For a general vector space $W$, we use $W\{x\}$ to denote the vector space of formal series of the form $\sum_{n \in \C} w_n\,x^n$, $w_n \in W$. We recall from \cite{FHL} (see also \cite{HL1, HLZ2}) the definition of intertwining operator among a triple of modules for a vertex operator algebra:

\begin{defi}{\rm
Let $W_1$, $W_2$ and $W_3$
be weak modules for a vertex operator algebra $V$. An {\it intertwining
operator} of type $\binom{W_3}{W_1\,W_2}$ is a linear map
\begin{align*}
\mathcal{Y}: W_1\otimes W_2 & \rightarrow W_3\{x\}\\
w_{1}\otimes w_{2}&\mapsto \mathcal{Y}(w_{1},x)w_{2}=\sum_{h\in
{\mathbb C}}(w_{1})_h
w_{2}\,x^{-h-1}\in W_3\{x\}
\end{align*}
satisfying the
following conditions:

\begin{enumerate}

\item  {\it Lower truncation}: for $w_{1}\in W_1$, $w_{2}\in W_2$, and $h\in
\mathbb{C}$, $(w_{1})_{h+n}w_{2}=0$ for $n\in {\mathbb
N}$ sufficiently large.

\item The {\it Jacobi identity}: for $v\in V$ and $w_{1}\in W_1$,
\begin{align}\label{jacobi}
 x^{-1}_0\delta \left( \frac{x_1-x_2}{x_0}\right)
Y_{W_{3}}(v,x_1)\mathcal{Y}(w_{1}, & \, x_2)  -  x^{-1}_0\delta \left( \frac{x_2-x_1}{-x_0}\right)
\mathcal{Y}(w_{1},x_2)Y_{W_{2}}(v,x_1)\nonumber\\
& =  x^{-1}_1\delta \left( \frac{x_2+x_0}{x_1}\right) \mathcal{
Y}(Y_{W_{1}}(v,x_0)w_{1},x_2).
\end{align}

\item The {\em $L(-1)$-derivative property:} for
$w_{1}\in W_1$,
\begin{equation}\label{L(-1)dev}
\mathcal{Y}(L(-1)w_{1},x)=\frac d{dx}\mathcal{Y}(w_{1},x).
\end{equation}
\end{enumerate}}
\end{defi}
\begin{rema}
We denote the vector space of intertwining operators of type $\binom{W_3}{W_1\,W_2}$ by $\mathcal{V}^{W_3}_{W_1\,W_2}$ and the corresponding \textit{fusion rule} is $\mathcal{N}^{W_3}_{W_1\,W_2}=\mathrm{dim}\,\mathcal{V}^{W_3}_{W_1\,W_2}$.
\end{rema}

Taking $W_1$, $W_2$, and $W_3$ to be $\N$-gradable weak $\gvmzero$-modules for some level $\ell$, suppose $\mathcal{Y}$ is an intertwining operator of type $\binom{W_3}{W_1\,W_2}$. We will need some consequences of the Jacobi identity and $L(-1)$-derivative property in this setting. First, the coefficient of $x_0^{-1} x_1^{-n-1}$ for $n\in\Z$ in \eqref{jacobi}, when $v=g(-1)\mathbf{1}$, $g\in\g$, is the commutator formula
\begin{align*}\label{commform}
 [g(n),\mathcal{Y}(w_1,x_2)] & =\mathrm{Res}_{x_0}\, (x_2+x_0)^n\mathcal{Y}(g(x_0)w_1,x_2)=\sum_{i\geq 0} \binom{n}{i} x_2^{n-i}\mathcal{Y}(g(i)w_1,x_2).
\end{align*}
If $w_1\in W_1$ satisfies $g(i)w_1=0$ for all $i>0$, we get
\begin{equation}\label{gcomp}
 [g(n),\mathcal{Y}(w_1,x)]=x^n\mathcal{Y}(g(0)w_1,x).
\end{equation}
Next, the iterate formula is the coefficient of $x_0^{-n-1} x_1^{-1}$ in \eqref{jacobi} for $v=g(-1)\mathbf{1}$ and $n\in \Z$:  
\begin{equation*}\label{itform}
 \mathcal{Y}(g(n)w_1,x_2)=\mathrm{Res}_{x_1}\left((x_1-x_2)^{n} g(x_1)\mathcal{Y}(w_1,x_2)-(-x_2+x_1)^{n}\mathcal{Y}(w_1,x_2)g(x_1)\right).
\end{equation*}
 The case $n=-1$ yields
\begin{align}\label{nord}
 \mathcal{Y}(g(-1)w,x_2) &  =\sum_{i\geq 0} g(-i-1) x_2^i\mathcal{Y}(w,x_2)+\sum_{i\geq 0} x_2^{-i-1}\mathcal{Y}(w,x_2)g(i)\nonumber\\
 & =g(x_2)^+\mathcal{Y}(w,x_2)+\mathcal{Y}(w,x_2)g(x_2)^-,
\end{align}
where $g(x)^{\pm}$ denote the non-singular and singular parts of $g(x)$, respectively.


Now for the $L(-1)$-derivative property: when $w_{1}\in W_1$ satisfies $g(i)w_{1}=0$ for $g\in\g$ and $i>0$, \eqref{L-1}, \eqref{L(-1)dev}, and \eqref{nord} imply
\begin{align*}
 (\ell+h^\vee)  \dfrac{d}{dx}\mathcal{Y}(w_{1},x)w_{2} &=\sum_{i=1}^{\mathrm{dim}\,\g}\mathcal{Y}(\gamma_i(-1)\gamma_i(0)w_{1},x)w_{2}\nonumber\\
 & =\sum_{i=1}^{\mathrm{dim}\,\g} \left(\gamma_i(x)^+\mathcal{Y}(\gamma_i(0)w_{1},x)w_{2}+\mathcal{Y}(\gamma_i(0)w_{1},x)\gamma_i(x)^-w_{2}\right)
\end{align*}
for any $w_{2}\in W_2$. If also $g(i)w_{2}=0$ for $g\in\g$ and $i>0$, we then have:
\begin{propo}\label{KZpropo}
 If $w_{1}\in W_1$, $w_{2}\in W_2$ satisfy $g(i)w_{1}, g(i)w_{2}=0$ for $g\in\g$ and $i>0$, then
 \begin{equation*}
  (\ell+h^\vee)\dfrac{d}{dx}\mathcal{Y}(w_{1},x)w_{2}=\sum_{i=1}^{\mathrm{dim}\,\g} \left(x^{-1}\mathcal{Y}(\gamma_i(0)w_{1},x)\gamma_i(0)w_{2}+\gamma_i(x)^+\mathcal{Y}(\gamma_i(0)w_{1},x)w_{2}\right).
 \end{equation*}
\end{propo}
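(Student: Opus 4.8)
The plan is to build directly on the $L(-1)$-derivative identity displayed immediately before the statement. That identity requires only $g(i)w_1=0$ for $i>0$ and holds for \emph{arbitrary} $w_2$; explicitly, it reads
\[
 (\ell+h^\vee)\frac{d}{dx}\mathcal{Y}(w_1,x)w_2=\sum_{i=1}^{\mathrm{dim}\,\g}\bigl(\gamma_i(x)^+\mathcal{Y}(\gamma_i(0)w_1,x)w_2+\mathcal{Y}(\gamma_i(0)w_1,x)\gamma_i(x)^-w_2\bigr).
\]
The first family of summands already matches the right-hand side of the proposition, so the only work left is to rewrite the second family of summands using the additional hypothesis $g(i)w_2=0$ for $i>0$.

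To carry this out I would expand the singular part as $\gamma_i(x)^-=\sum_{j\geq 0}\gamma_i(j)\,x^{-j-1}$ and apply it to $w_2$. Since $\gamma_i(j)w_2=0$ for every $j>0$ by hypothesis, every term with $j\geq 1$ drops out and only the $j=0$ term survives, giving $\gamma_i(x)^-w_2=x^{-1}\gamma_i(0)w_2$. Substituting this turns the summand $\mathcal{Y}(\gamma_i(0)w_1,x)\gamma_i(x)^-w_2$ into $x^{-1}\mathcal{Y}(\gamma_i(0)w_1,x)\gamma_i(0)w_2$, and summing over $i$ (reordering the two terms, which is immaterial) produces exactly the stated formula.

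There is no real obstacle here: the argument is essentially a one-line specialization of an identity already in hand. The only points requiring care are bookkeeping ones—correctly identifying $\gamma_i(x)^-$ as the negative-power part $\sum_{j\geq0}\gamma_i(j)x^{-j-1}$ (so that the $j=0$, i.e.\ $x^{-1}$, term is retained rather than discarded), and checking that each resulting series still lies in $W_3\{x\}$, which follows from the lower-truncation axiom since $\gamma_i(0)w_1\in W_1$. For completeness I would also record how the preceding display itself arises: apply \eqref{L(-1)dev} together with the expression \eqref{L-1} for $L(-1)$, use $\gamma_i(n)w_1=0$ for $n>0$ to collapse the double sum to its $n=0$ contributions $\gamma_i(-1)\gamma_i(0)w_1$, and then apply the $n=-1$ iterate formula \eqref{nord} to each $\mathcal{Y}(\gamma_i(-1)\gamma_i(0)w_1,x)$.
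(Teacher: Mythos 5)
Your argument is correct and is exactly the paper's own: the identity displayed before the proposition is specialized using $\gamma_i(j)w_2=0$ for $j>0$, so that $\gamma_i(x)^-w_2$ collapses to its $j=0$ term $x^{-1}\gamma_i(0)w_2$. Nothing further is needed.
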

\begin{rema}
We shall construct intertwining operators of type $\binom{W_3}{W_1\,W_2}$ from solutions to the differential equation of Proposition \ref{KZpropo}, which is basically a Knizhnik-Zamolodchikov equation \cite{KZ} for three-point correlation functions in conformal field theory based on $\ghat$. 
\end{rema}

For $i=1,2,3$, let $U_i$ denote the degree-zero subspace of the $\N$-gradable weak $V$-module $W_i$; each $U_i$ is a $\g$-module. Then if $\mathcal{Y}$ is an intertwining operator of type $\binom{W_3}{W_1\,W_2}$, Proposition \ref{KZpropo} shows that
\begin{equation*}
\mathcal{Y}\vert_{U_1\otimes U_2}=\mathcal{Y}(\cdot\otimes\cdot,x): U_1\otimes U_2\rightarrow W_3\lbrace x\rbrace
\end{equation*}
satisfies
\begin{equation*}
 (\ell+h^\vee)\dfrac{d}{dx} \mathcal{Y}(u_{1}\otimes u_{2},x)=x^{-1}\mathcal{Y}(C_{U_1,U_2}(u_{1}\otimes u_{2}),x)+\sum_{i=1}^{\mathrm{dim}\,\g} \gamma_i(x)^+ \mathcal{Y}(\gamma_i\cdot u_{1}\otimes u_{2},x),
\end{equation*}
where for $u_1\in U_1$, $u_2\in U_2$,
\begin{equation}\label{cu1u2orig}
 C_{U_1,U_2}(u_{1}\otimes u_{2})=\sum_{i=1}^{\mathrm{dim}\,\g} \gamma_i\cdot u_{1}\otimes \gamma_i\cdot u_{2}
\end{equation}
Using $C_U$ to denote the action of the Casimir element of $(\g,\langle\cdot,\cdot\rangle)$ on a $\g$-module $U$, we have
\begin{equation}\label{cu1u2}
 C_{U_1,U_2}=\dfrac{1}{2}\left(C_{U_1\otimes U_2}-C_{U_1}\otimes 1_{U_2}-1_{U_1}\otimes C_{U_2}\right),
\end{equation}
so $C_{U_1,U_2}$ is a $\g$-endomorphism of $U_1\otimes U_2$.

We now present our construction theorems for intertwining operators among $\gvmzero$-modules. The first is from \cite{MY}; it does not require the $\g$-modules $U_i$ to be finite dimensional:
\begin{theo}[\cite{MY}, Theorem 6.2]\label{intwopext}
 Suppose $\mathcal{Y}(\cdot,x)\cdot: U_1\otimes U_2\rightarrow W_3\lbrace x\rbrace$ is a lower-truncated linear map satisfying
\begin{equation}\label{comm1}
 [g(n),\mathcal{Y}(u,x)]=x^n\mathcal{Y}(g(0)u,x)
\end{equation}
for $g\in\mathfrak{g}$, $n\geq 0$, and
\begin{equation}\label{l0comm}
 [L(0),\mathcal{Y}(u,x)]=x\dfrac{d}{dx}\mathcal{Y}(u,x)+\mathcal{Y}(L(0)u,x).
\end{equation}
Then $\mathcal{Y}$ has a unique extension to an intertwining operator of type $\binom{W_3}{\gvmone\,\gvmtwo}$.
\end{theo}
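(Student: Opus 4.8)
The plan is to exploit that the generalized Verma modules $W_1=\gvmone$ and $W_2=\gvmtwo$ are generated from their degree-zero subspaces $U_1$ and $U_2$ by the creation operators $g(-n)$ ($g\in\g$, $n\geq 1$), and to extend $\mathcal{Y}$ by imposing the commutator and iterate formulas that any intertwining operator of type $\binom{W_3}{W_1\,W_2}$ must satisfy. Uniqueness is then immediate: such an operator obeys the commutator formula \eqref{gcomp} and the iterate formula (the relevant coefficients of the Jacobi identity \eqref{jacobi} at $v=g(-1)\mathbf{1}$), and these express $\mathcal{Y}(w_1,x)w_2$ for $w_1,w_2$ of positive degree in terms of values on lower-degree vectors; hence any extension of $\mathcal{Y}\vert_{U_1\otimes U_2}$ is determined recursively and is unique.

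For existence I would build the extension in two stages. First, for fixed $u_1\in U_1$, I extend the operator $\mathcal{Y}(u_1,x)$ from $U_2$ to all of $W_2$ by requiring \eqref{comm1} to persist for creation operators, i.e. by setting, for $n\geq 1$ and $w_2\in W_2$,
\[
 \mathcal{Y}(u_1,x)g(-n)w_2:=g(-n)\mathcal{Y}(u_1,x)w_2-x^{-n}\mathcal{Y}(g(0)u_1,x)w_2;
\]
since $g(0)u_1\in U_1$, this recursion stays within the given data in the first slot while strictly lowering the degree of $w_2$. Second, with each $\mathcal{Y}(u_1,x)$ now defined on all of $W_2$, I extend to the first variable by defining $\mathcal{Y}(g(-n)w_1,x)$, $n\geq 1$, through the iterate formula, whose right-hand side involves only the already-defined $\mathcal{Y}(w_1,x)$ together with the $\g$-actions on $W_2$ and $W_3$. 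Induction on the degree of $w_1$ then defines $\mathcal{Y}$ on all of $W_1\otimes W_2$.

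The step I expect to be the main obstacle is verifying that this prescription is well defined and genuinely produces an intertwining operator, since the PBW spanning vectors of $W_1$ and $W_2$ satisfy relations coming from \eqref{affalgcomm}. Concretely, I would prove by induction on degree that the extended $\mathcal{Y}$ satisfies the general commutator formula and the general iterate formula for all $w_1\in W_1$; the consistency of the two recursions --- that is, independence of the order in which creation operators are removed --- is governed precisely by the affine relations \eqref{affalgcomm} together with the hypothesis \eqref{comm1}, and this is where the real work lies. Granting it, the commutator and iterate formulas for all $w_1$ are equivalent to the Jacobi identity \eqref{jacobi} at $v=g(-1)\mathbf{1}$, and since $\gvmzero$ is generated as a vertex algebra by the $g(-1)\mathbf{1}$, the Jacobi identity for arbitrary $v$ follows by the standard commutativity/associativity bootstrap along vertex-algebra generators. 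Finally, lower truncation follows from the $\N$-gradability of $W_3$ with the powers of $x$ controlled by the hypothesis \eqref{l0comm}, and the $L(-1)$-derivative property \eqref{L(-1)dev} is confirmed by a direct computation on the generators $u_1\in U_1$ using \eqref{L-1} and then propagated to all of $W_1$ via \eqref{Lgcomm} and the iterate formula.
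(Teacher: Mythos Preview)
The paper does not give its own proof of this theorem: it is quoted verbatim from \cite{MY} (Theorem 6.2 there) and is invoked only as a black box in the proofs of Theorems \ref{maintheorem1} and \ref{maintheorem2}. So there is no in-paper argument to compare against; the relevant comparison is with the proof in \cite{MY}.

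Your outline matches the strategy of \cite{MY} in spirit: extend $\mathcal{Y}$ off $U_1\otimes U_2$ by forcing the commutator and iterate identities, check compatibility with the affine relations \eqref{affalgcomm}, and bootstrap the Jacobi identity from the generators $g(-1)\mathbf{1}$. Two points are worth sharpening. First, in your Stage~1 you should note explicitly why the recursion on $W_2$ is well posed: since $\gvmtwo\cong U(\ghat_-)\otimes_\C U_2$ is free over $U(\ghat_-)$, the only relations to verify are those of $\ghat_-$ itself, and the check that $[g(-m),h(-n)]$ acts consistently reduces to the $n\geq 0$ commutator hypothesis \eqref{comm1} together with the Jacobi identity in $\g$. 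Second, your lower-truncation argument is slightly underpowered as stated: the hypothesis \eqref{l0comm} is only assumed on $U_1\otimes U_2$, so you must first propagate the $L(0)$-commutation relation through both extension stages (using \eqref{Lgcomm}) before concluding that the extended $\mathcal{Y}$ is graded of the right shape and hence lower truncated on all of $W_1\otimes W_2$. With these two refinements your sketch is a correct blueprint, essentially the one carried out in detail in \cite{MY}.
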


Conversely, by the commutator and $L(-1)$-derivative formulas, any intertwining operator of type $\binom{W_3}{\gvmone\,\gvmtwo}$ restricted to $U_1\otimes U_2$ satisfies \eqref{comm1} and \eqref{l0comm}. To construct lower-truncated linear maps as in Theorem \ref{intwopext}, an ansatz for the shape of the formal series $\mathcal{Y}$ will help. Thus, we now assume that $L(0)$ acts on each $\g$-module $U_i$ as a scalar $h_i\in\C$; equivalently by \eqref{L0}, $C_{U_i}$ acts as $2(\ell+h^\vee)h_i$. For example, $U_i$ could be a not-necessarily-finite-dimensional irreducible weight $\g$-module with finite-dimensional weight spaces.
Set $h=h_3-h_1-h_2$; under our assumption, \eqref{l0comm} is equivalent to
\begin{equation*}
 \mathcal{Y}(u_1,x)u_2=\sum_{m\in\Z}\mathcal{Y}_m(u_1\otimes u_2)\,x^{h+m}
\end{equation*}
for $u_1\in U_1$, $u_2\in U_2$, with $\mathcal{Y}_m: U_1\otimes U_2\rightarrow W_3(m)$ for $m\in\Z$. Then \eqref{comm1} is equivalent to
\begin{equation}\label{commcomp}
 g(n)\mathcal{Y}_m(u_1\otimes u_2)=\mathcal{Y}_{m-n}(g(0)u_1\otimes u_2)+\mathcal{Y}_m(u_1\otimes g(n)u_2)
\end{equation}
for $g\in\g$, $u_1\in U_1$, $u_2\in U_2$, $m\in\Z$, and $n\geq 0$. 

Each $\mathcal{Y}_m$ is a $\g$-module homomorphism by the $n=0$ case of \eqref{commcomp}, so $\mathcal{Y}\mapsto\mathcal{Y}_0$ defines a linear map $\mathcal{V}^{W_3}_{\gvmone\,\gvmtwo}\rightarrow\mathrm{Hom}_\g(U_1\otimes U_2,U_3)$. Conversely, we will construct intertwining operators starting from such $\g$-module homomorphisms using the following theorem:
\begin{theo}\label{maintheorem1}
 Suppose that for all $N\in\Z_+$, the $\g$-module endomorphism
$$(\ell+h^\vee)(h+N)-C_{U_1,U_2}$$
of $U_1\otimes U_2$ is invertible. Then for any $f\in\mathrm{Hom}_\g(U_1\otimes U_2,U_3)$, there are unique linear maps
 \begin{equation*}
  \mathcal{Y}_m: U_1\otimes U_2\rightarrow W_3(m)
 \end{equation*}
for $m\in\Z$ such that $\mathcal{Y}_0=f$ and \eqref{commcomp} holds.
\end{theo}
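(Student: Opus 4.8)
The plan is to convert \eqref{commcomp} into a recursion that determines each $\mathcal{Y}_m$ from the lower components $\mathcal{Y}_0,\dots,\mathcal{Y}_{m-1}$, with the invertibility hypothesis being exactly what is needed to solve it. Since $W_3$ is $\N$-gradable, $W_3(m)=0$ for $m<0$, so necessarily $\mathcal{Y}_m=0$ there; only the components with $m\geq 0$ must be produced, subject to $\mathcal{Y}_0=f$. Throughout I use that $W_3(m)$ is the $L(0)$-eigenspace of eigenvalue $h_3+m$, so that $L(0)\mathcal{Y}_m(u_1\otimes u_2)=(h_3+m)\mathcal{Y}_m(u_1\otimes u_2)$; this is the content of the $L(0)$-bracket relation \eqref{l0comm} built into the grading ansatz.

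The first step is to extract the recursion. Applying the Sugawara operator \eqref{L0} to $\mathcal{Y}_m(u_1\otimes u_2)$ and using \eqref{commcomp} to evaluate the modes: the $n=0$ case rewrites $\sum_i\gamma_i(0)^2\mathcal{Y}_m$ through the diagonal Casimir, which by \eqref{cu1u2} and $C_{U_i}=2(\ell+h^\vee)h_i$ becomes $2(\ell+h^\vee)(h_1+h_2)\mathcal{Y}_m+2\,\mathcal{Y}_m\circ C_{U_1,U_2}$, while the $n>0$ cases (note $g(n)u_2=0$ for $u_2\in U_2$) give $\gamma_i(n)\mathcal{Y}_m(u_1\otimes u_2)=\mathcal{Y}_{m-n}(\gamma_i(0)u_1\otimes u_2)$. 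Equating with $(h_3+m)\mathcal{Y}_m$ and simplifying via $h=h_3-h_1-h_2$ yields
\begin{equation*}
 \mathcal{Y}_m\circ\big((\ell+h^\vee)(h+m)-C_{U_1,U_2}\big)=\sum_{i=1}^{\dim\g}\sum_{n=1}^{m}\gamma_i(-n)\,\mathcal{Y}_{m-n}\circ(\gamma_i\otimes 1),
\end{equation*}
which is exactly the component form of the Knizhnik--Zamolodchikov equation of Proposition \ref{KZpropo}. For $m=0$ the right side is empty and the constraint $\mathcal{Y}_0\circ\big((\ell+h^\vee)h-C_{U_1,U_2}\big)=0$ is automatic: since $f$ is a $\g$-homomorphism into $U_3$ and $C_{U_1\otimes U_2}$ acts through $f$ as $C_{U_3}=2(\ell+h^\vee)h_3$, identity \eqref{cu1u2} gives $f\circ C_{U_1,U_2}=(\ell+h^\vee)h\,f$. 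Uniqueness follows at once: as $(\ell+h^\vee)(h+N)-C_{U_1,U_2}$ is invertible for every $N\in\Z_+$, the displayed recursion determines $\mathcal{Y}_m$ from the lower components, and $\mathcal{Y}_0=f$ is prescribed.

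For existence I would reverse this, defining $\mathcal{Y}_0=f$, $\mathcal{Y}_m=0$ for $m<0$, and for $m\geq 1$ setting $\mathcal{Y}_m$ equal to the right side of the recursion composed with the inverse of $(\ell+h^\vee)(h+m)-C_{U_1,U_2}$; this is a well-defined map into $W_3(m)$ because $C_{U_1,U_2}$ is a $\g$-endomorphism and $\gamma_i(-n)$ raises the grading by $n$. It then remains to check that these maps satisfy the full system \eqref{commcomp} for all $g\in\g$, $m\in\Z$, and $n\geq 0$, which I would prove by strong induction on $m$, treating the $n=0$ case (the $\g$-equivariance of $\mathcal{Y}_m$) and the $n\geq 1$ cases separately. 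In each, one computes $g(n)\mathcal{Y}_m$ from the defining formula by commuting $g(n)$ past the factors $\gamma_i(-p)$ using \eqref{affalgcomm}, applies the inductive hypothesis to the lower components, and collects terms.

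I expect this last verification of the $n\geq 1$ relations to be the main obstacle. Commuting $g(n)$ through $\gamma_i(-p)$ produces a bracket term $[g,\gamma_i](n-p)$ and, when $p=n$, a central term proportional to $\ell\langle g,\gamma_i\rangle$; one must show that after summing over $i$ and invoking the inductive hypothesis these reassemble precisely into $\mathcal{Y}_{m-n}\circ(g\otimes 1)$. The central contributions collapse through $\sum_i\langle g,\gamma_i\rangle\gamma_i=g$ into a factor $\ell$, the bracket contributions through the invariance of $\langle\cdot,\cdot\rangle$ and the dual Coxeter identity $\sum_i[g,\gamma_i]\gamma_i=h^\vee g$ into a factor $h^\vee$, and together they produce the coefficient $\ell+h^\vee$ in the operator being inverted; the remaining terms close up using that $C_{U_1,U_2}$ commutes with the diagonal action and the relation $f\circ C_{U_1,U_2}=(\ell+h^\vee)h\,f$ already noted. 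The $n=0$ case is analogous but simpler, involving no central term and closing via the invariance identity $\sum_i\big([g,\gamma_i]\otimes\gamma_i+\gamma_i\otimes[g,\gamma_i]\big)=0$. Managing this bookkeeping uniformly in $m$ is the technically delicate point.
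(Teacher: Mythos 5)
Your proposal is correct and follows essentially the same route as the paper: the same recursion \eqref{KZcomp} (which you derive directly from the Sugawara form of $L(0)$ rather than via Theorem \ref{intwopext} and Proposition \ref{KZpropo} --- an equivalent derivation), the same $m=0$ compatibility check via $f\circ C_{U_1,U_2}=(\ell+h^\vee)h\,f$, and the same strong induction verifying \eqref{commcomp} split into the $n=0$ and $n\geq 1$ cases. The bookkeeping you leave unexecuted does close up exactly as you predict, using precisely the two identities you name, $\sum_i\bigl([g,\gamma_i]\otimes\gamma_i+\gamma_i\otimes[g,\gamma_i]\bigr)=0$ and $\sum_i[g,\gamma_i]\gamma_i=h^\vee g$, which are the paper's Lemmas \ref{lemma1} and \ref{lemma2}.
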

\begin{proof}
 Since $W_3$ is $\N$-graded, we must have $\mathcal{Y}_m=0$ for $m<0$. Now, if the desired linear maps exist for $m\geq 0$, then Theorem \ref{intwopext} implies that
 \begin{equation*}
  \mathcal{Y}(\cdot,x)\cdot=\sum_{m\geq 0} \mathcal{Y}_m\, x^{h+m}: U_1\otimes U_2\rightarrow W_3\lbrace x\rbrace
 \end{equation*}
extends to an intertwining operator of type $\binom{W_3}{\gvmone\,\gvmtwo}$. So by Proposition \ref{KZpropo},
\begin{equation*}
 (\ell+h^\vee)\dfrac{d}{dx}\mathcal{Y}(u_1,x)u_2=\sum_{i=1}^{\mathrm{dim}\,\g}\left(x^{-1}\mathcal{Y}(\gamma_i\cdot u_1,x)(\gamma_i\cdot u_2)+\gamma_i(x)^+\mathcal{Y}(\gamma_i\cdot u_1,x)u_2\right)
\end{equation*}
for $u_1\in U_1$ and $u_2\in U_2$. In component form, this is
\begin{equation}\label{KZcomp}
 \mathcal{Y}_m\big([(\ell+h^\vee)(h+m)-C_{U_1,U_2}](u_1\otimes u_2)\big)=\sum_{i=1}^{\mathrm{dim}\,\g}\sum_{k=1}^{m}\gamma_i(-k)\mathcal{Y}_{m-k}(\gamma_i\cdot u_1\otimes u_2)
\end{equation}
for $m\geq 0$. To prove the theorem, it is enough to show that \eqref{KZcomp} has a unique solution for $m> 0$ given $\mathcal{Y}_0=f$, and that this solution satisfies \eqref{commcomp}.

We first show that $\mathcal{Y}_0=f$ satisfies the $m=0$ case of \eqref{KZcomp}. Since $C_{U_i} = 2(\ell+h^\vee)h_i$ for $i=1,2,3$ (recall \eqref{L0}), \eqref{cu1u2} implies
\begin{align}\label{eqn:Casimir_relation}
 (\ell+h^\vee)h- C_{U_1,U_2} & =(\ell+h^\vee)(h_3-h_1-h_2)-\frac{1}{2}(C_{U_1\otimes U_2}-C_{U_1}\otimes 1_{U_2}-1_{U_1}\otimes C_{U_2})\nonumber\\
 & = (\ell+h^\vee)h_3-\frac{1}{2}C_{U_1\otimes U_2}.
\end{align}
Then because Casimir operators commute with $\g$-homomorphisms, we have
\begin{equation}\label{holdsfor0}
 f\big([(\ell+h^\vee)h-C_{U_1,U_2}](u_1\otimes u_2)\big)=(\ell+h^\vee)h_3 f(u_1\otimes u_2)-\frac{1}{2} C_{U_3} f(u_1\otimes u_2)=0
\end{equation}
for $u_1\in U_1$ and $u_2\in U_2$, as required. Now we can use \eqref{KZcomp} to construct $\mathcal{Y}_m$ recursively, since by assumption $(\ell+h^\vee)(h+m)-C_{U_1,U_2}$ is invertible for all $m>0$:
\begin{equation}\label{ym}
 \mathcal{Y}_m(u_1\otimes u_2)=\sum_{i=1}^{\mathrm{dim}\,\g}\sum_{k=1}^m \gamma_i(-k)\mathcal{Y}_{m-k}\left((\gamma_i\otimes 1)\big[(\ell+h^\vee)(h+m)-C_{U_1,U_2}\big]^{-1}(u_1\otimes u_2)\right)
\end{equation}
for $u_1\in U_1$, $u_2\in U_2$. This shows \eqref{KZcomp} has a unique solution for each $m> 0$ given $\mathcal{Y}_0=f$. 

\allowdisplaybreaks

We need to show that $\mathcal{Y}_m$ as given by \eqref{ym} satisfies \eqref{commcomp} for $m\geq 0$. As both sides of \eqref{commcomp} are zero for $n>m$, we may assume $0\leq n\leq m$ and prove \eqref{commcomp} by induction on $m$. The base case $m=0$ is clear because $\mathcal{Y}_0=f$ is a $\g$-module homomorphism, so we assume \eqref{commcomp} holds for all $m$ less than some fixed $M>0$ and prove \eqref{commcomp} for $M$. Since the $\g$-homomorphism $C^{(M)}_{U_1,U_2}=(\ell+h^\vee)(h+M)-C_{U_1,U_2}$ is invertible, it is enough to prove that
\begin{align}
 g(0)\mathcal{Y}_M\big(C^{(M)}_{U_1,U_2}(u_1\otimes u_2)\big)=\mathcal{Y}_{M}\big(C^{(M)}_{U_1,U_2}(g\otimes 1+1\otimes g)(u_1\otimes u_2)\big), \label{g(0)comp} \\
 g(n)\mathcal{Y}_M\big(C^{(M)}_{U_1,U_2}(u_1\otimes u_2)\big)=\mathcal{Y}_{M-n}\big((g\otimes 1)C^{(M)}_{U_1,U_2}(u_1\otimes u_2)\big) \label{g(n)comp} 
\end{align}
for $g\in\g$ and $1\leq n\leq M$. For \eqref{g(0)comp}, we use \eqref{KZcomp}, the induction hypothesis, the commutation relations \eqref{affalgcomm}, and Lemma \ref{lemma1} below to obtain
\begin{align*}
 g & (0)  \mathcal{Y}_M\big(C^{(M)}_{U_1,U_2}(u_1\otimes u_2)\big)  =\sum_{i=1}^{\mathrm{dim}\,\g}\sum_{k=1}^M g(0)\gamma_i(-k)\mathcal{Y}_{M-k}(\gamma_i\cdot u_1\otimes u_2)\nonumber\\
 & =\sum_{i=1}^{\mathrm{dim}\,\g}\sum_{k=1}^M \big(\gamma_i(-k)\mathcal{Y}_{M-k}((g\otimes 1+1\otimes g)(\gamma_i\otimes 1)(u_1\otimes u_2))+ [g,\gamma_i](-k)\mathcal{Y}_{M-k}(\gamma_i\cdot u_1\otimes u_2)\big)\nonumber\\
 & =\sum_{i=1}^{\mathrm{dim}\,\g}\sum_{k=1}^M \gamma_i(-k)\mathcal{Y}_{M-k}\big((\gamma_i\otimes 1)(g\otimes 1+1\otimes g)(u_1\otimes u_2)\big)\nonumber\\
 &\;\;\;\;\;+\sum_{i=1}^{\mathrm{dim}\,\g}\sum_{k=1}^M \big([g,\gamma_i](-k)\mathcal{Y}_{M-k}((\gamma_i\otimes 1)(u_1\otimes u_2))+\gamma_i(-k)\mathcal{Y}_{M-k}(([g,\gamma_i]\otimes 1)(u_1\otimes u_2))\big)\nonumber\\
 & =\mathcal{Y}_M \big(C^{(M)}_{U_1,U_2}(g\otimes 1+1\otimes g)(u_1\otimes u_2)\big)
\end{align*}
for any $u_1\in U_1$ and $u_2\in U_2$.

\begin{lemma}\label{lemma1}
 In $\g\otimes\g$, $\sum_{i=1}^{\mathrm{dim}\,\g} [g,\gamma_i]\otimes\gamma_i=-\sum_{i=1}^{\mathrm{dim}\,\g}\gamma_i\otimes [g,\gamma_i]$ for any $g\in\g$.
\end{lemma}
\begin{proof}
For $g\in\g$, we have $[g,\gamma_i]=\sum_{j=1}^{\mathrm{dim}\,\g} c^j_i \gamma_j$ for each $1\leq i\leq\mathrm{dim}\,\g$, where $c^j_i\in\C$. Then
\begin{equation*}
 \sum_{i=1}^{\mathrm{dim}\,\g} [g,\gamma_i]\otimes\gamma_i=\sum_{i,j=1}^{\mathrm{dim}\,\g} c^j_i (\gamma_j\otimes\gamma_i),
\end{equation*}
while
\begin{equation*}
 \sum_{i=1}^{\mathrm{dim}\,\g}\gamma_i\otimes[g,\gamma_i]=\sum_{i,j=1}^{\mathrm{dim}\,\g} c^j_i(\gamma_i\otimes\gamma_j)=\sum_{i,j=1}^{\mathrm{dim}\,\g}c^i_j (\gamma_j\otimes\gamma_i).
\end{equation*}
But by the invariance of the form $\langle\cdot,\cdot\rangle$ on $\g$, we have
\begin{equation*}
 c^j_i=\langle\gamma_j,[g,\gamma_i]\rangle=\langle [\gamma_j,g],\gamma_i\rangle=-c^i_j,
\end{equation*}
proving the lemma.
\end{proof}

For \eqref{g(n)comp}, we use  \eqref{KZcomp}, the affine Lie algebra commutation relations \eqref{affalgcomm}, the induction hypothesis (formula \eqref{commcomp} for $0\leq m<M$), Lemma \ref{lemma1}, and the induction hypothesis again:
\begin{align}\label{calc1}
 g(n) & \mathcal{Y}_M\big(C^{(M)}_{U_1,U_2}(u_1\otimes u_2)\big)=\sum_{i=1}^{\mathrm{dim}\,\g}\sum_{k=1}^M g(n)\gamma_i(-k)\mathcal{Y}_{M-k}(\gamma_i\cdot u_1\otimes u_2)\nonumber\\
 & =\sum_{i=1}^{\dim\g}\sum_{k=1}^M \gamma_i(-k)g(n)\mathcal{Y}_{M-k}(\gamma_i\cdot u_1\otimes u_2)\nonumber\\
 &\;\;\;\;\;+\sum_{i=1}^{\mathrm{dim}\,\g}\sum_{k=1}^M \big([g,\gamma_i](n-k)+n\langle g,\gamma_i\rangle\delta_{n,k}\ell\big)\mathcal{Y}_{M-k}(\gamma_i\cdot u_1\otimes u_2)\nonumber\\
 &=\sum_{i=1}^{\mathrm{dim}\,\g}\sum_{k=1}^{M-n} \gamma_i(-k)\mathcal{Y}_{M-n-k}(g\cdot(\gamma_i\cdot u_1)\otimes u_2)\nonumber\\
 &\;\;\;\;\;+ \sum_{i=1}^{\dim\g}\left(\sum_{k=n+1}^M [g,\gamma_i](n-k)\mathcal{Y}_{M-k}(\gamma_i\cdot u_1\otimes u_2)+\sum_{k=1}^n [g,\gamma_i](n-k)\mathcal{Y}_{M-k}(\gamma_i\cdot u_1\otimes u_2)\right)\nonumber\\
 &\;\;\;\;\;+n\ell\sum_{i=1}^{\dim\g}\langle g,\gamma_i\rangle\mathcal{Y}_{M-n}(\gamma_i\cdot u_1\otimes u_2)\nonumber\\ 
 & =\sum_{i=1}^{\mathrm{dim}\,\g}\sum_{k=1}^{M-n}\gamma_i(-k)\mathcal{Y}_{M-n-k}(\gamma_i\cdot(g\cdot u_1)\otimes u_2)\nonumber\\
 &\;\;\;\;\;+\sum_{i=1}^{\mathrm{dim}\,\g}\sum_{k=1}^{M-n}\big(\gamma_i(-k)\mathcal{Y}_{M-n-k}([g,\gamma_i]\cdot u_1\otimes u_2)+[g,\gamma_i](-k)\mathcal{Y}_{M-n-k}(\gamma_i\cdot u_1\otimes u_2)\big)\nonumber\\
 &\;\;\;\;\;+\sum_{i=1}^{\mathrm{dim}\,\g}\sum_{k=1}^n [g,\gamma_i](n-k)\mathcal{Y}_{M-k}(\gamma_i\cdot u_1\otimes u_2) +n\ell\mathcal{Y}_{M-n}(g\cdot u_1\otimes u_2)\nonumber\\
& =\sum_{i=1}^{\mathrm{dim}\,\g}\sum_{k=1}^{M-n}\gamma_i(-k)\mathcal{Y}_{M-n-k}(\gamma_i\cdot(g\cdot u_1)\otimes u_2)+\sum_{i=1}^{\mathrm{dim}\,\g}\sum_{k=1}^{n-1} \mathcal{Y}_{M-n}([g,\gamma_i]\cdot(\gamma_i\cdot u_1)\otimes u_2)\nonumber\\
&\;\;\;\;\; +\sum_{i=1}^{\mathrm{dim}\,\g}\mathcal{Y}_{M-n}\big([g,\gamma_i]\cdot(\gamma_i\cdot u_1)\otimes u_2+\gamma_i\cdot u_1\otimes [g,\gamma_i]\cdot u_2\big)+n\ell\mathcal{Y}_{M-n}(g\cdot u_1\otimes u_2)\nonumber\\
 & =\sum_{i=1}^{\mathrm{dim}\,\g}\sum_{k=1}^{M-n}\gamma_i(-k)\mathcal{Y}_{M-n-k}(\gamma_i\cdot(g\cdot u_1)\otimes u_2)+n\sum_{i=1}^{\mathrm{dim}\,\g} \mathcal{Y}_{M-n}([g,\gamma_i]\cdot(\gamma_i\cdot u_1)\otimes u_2)\nonumber\\
&\;\;\;\;\; +\sum_{i=1}^{\mathrm{dim}\,\g}\mathcal{Y}_{M-n}(\gamma_i\cdot u_1\otimes [g,\gamma_i]\cdot u_2)+n\ell\mathcal{Y}_{M-n}(g\cdot u_1\otimes u_2).\nonumber\\
\end{align}
First consider the case $1\leq n\leq M-1$. Using \eqref{KZcomp} and \eqref{cu1u2orig}, the first term on the right of \eqref{calc1} becomes
\begin{align}\label{calc2}
 \mathcal{Y}_{M-n}\big([(\ell+h^\vee) & (h+M-n)-C_{U_1, U_2}](g\cdot u_1\otimes u_2)\big) =\mathcal{Y}_{M-n}\big((g\otimes 1) C^{(M)}_{U_1,U_2}(u_1\otimes u_2)\big)\nonumber\\
 & +\sum_{i=1}^{\mathrm{dim}\,\g} \mathcal{Y}_{M-n}([g,\gamma_i]\cdot u_1\otimes \gamma_i\cdot u_2)-n(\ell+h^\vee)\mathcal{Y}_{M-n}(g\cdot u_1\otimes u_2).
\end{align}
To analyze the second term on the right of \eqref{calc1}, we use another lemma:
\begin{lemma}\label{lemma2}
 In $U(\g)$, $\sum_{i=1}^{\mathrm{dim}\,\g} [g,\gamma_i]\gamma_i=h^\vee g$ for any $g\in\g$.
\end{lemma}
\begin{proof}
We know from Lemma \ref{lemma1} that $\sum_{i=1}^{\mathrm{dim}\,\g} [g,\gamma_i]\gamma_i=-\sum_{i=1}^{\mathrm{dim}\,\g} \gamma_i [g,\gamma_i]$. Therefore
\begin{align*}
 \sum_{i=1}^{\mathrm{dim}\,\g} [g,\gamma_i]\gamma_i=\dfrac{1}{2}\sum_{i=1}^{\mathrm{dim}\,\g} ([g,\gamma_i]\gamma_i-\gamma_i[g,\gamma_i])=\dfrac{1}{2}\sum_{i=1}^{\mathrm{dim}\,\g} [[g,\gamma_i],\gamma_i]= h^\vee g,
\end{align*}
recalling that the Casimir operator on the adjoint representation $\g$ is the scalar $2h^\vee$.
\end{proof}
Now we insert \eqref{calc2} back into \eqref{calc1} and cancel terms using Lemmas \ref{lemma1} and \ref{lemma2}. Only the first term to the right of the equality in \eqref{calc2} survives, completing the proof of the case $1\leq n\leq M-1$.

%

Finally for the case $n=M$, the first term on the right of \eqref{calc1} vanishes. We calculate the remaining terms using Lemmas \ref{lemma1} and \ref{lemma2}, and then \eqref{cu1u2orig} and \eqref{holdsfor0}: 
\begin{align*}
 M(\ell+h^\vee) & \mathcal{Y}_0(g\cdot u_1\otimes u_2)-\sum_{i=1}^{\mathrm{dim}\,\g}\mathcal{Y}_0([g,\gamma_i]\cdot u_1\otimes\gamma_i\cdot u_2)\nonumber\\
 & =\mathcal{Y}_0\big((g\otimes 1)[(\ell+h^\vee)M-C_{U_1,U_2}](u_1\otimes u_2)\big)+\mathcal{Y}_0\big(C_{U_1,U_2}(g\cdot u_1\otimes u_2)\big)\nonumber\\
 & =\mathcal{Y}_0\big((g\otimes 1) C^{(M)}_{U_1, U_2}(u_1\otimes u_2)\big)+\mathcal{Y}_0\big([C_{U_1,U_2}-(\ell+h^\vee)h](g\cdot u_1\otimes u_2)\big)\nonumber\\
 & =\mathcal{Y}_0\big((g\otimes 1) C^{(M)}_{U_1,U_2}(u_1\otimes u_2)).
\end{align*}
This completes the proof of the theorem.
\end{proof}

Now our main theorem combines Theorems \ref{intwopext} and \ref{maintheorem1}:
\begin{theo}\label{maintheorem2}
 Suppose $U_1$, $U_2$ are $\g$-modules and $W_3$ is an $\N$-gradable weak $\gvmzero$-module such that $L(0)$ acts on $U_1$, $U_2$, and $W_3(0)$ by scalars $h_1$, $h_2$, and $h_3$, respectively. If moreover $(\ell+h^\vee)(h_3+N)-\frac{1}{2} C_{U_1\otimes U_2}$ is invertible on $U_1\otimes U_2$ for all $N\in\Z_+$, then the linear map
 \begin{align*}
  \mathcal{V}^{W_3}_{\gvmone\,\gvmtwo} & \rightarrow\mathrm{Hom}_\g(U_1\otimes U_2, W_3(0))\nonumber\\
  \mathcal{Y} & \mapsto \mathcal{Y}_0\vert_{U_1\otimes U_2}
 \end{align*}
is an isomorphism.
\end{theo}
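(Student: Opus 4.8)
The plan is to obtain the theorem as a formal consequence of Theorems \ref{intwopext} and \ref{maintheorem1}, so almost all of the work is to set up a clean bijective correspondence and to reconcile the two statements of the invertibility hypothesis. First I would record that the condition imposed here is literally the one required by Theorem \ref{maintheorem1}: writing $U_3=W_3(0)$ and $h=h_3-h_1-h_2$, adding $(\ell+h^\vee)N$ to both sides of the identity \eqref{eqn:Casimir_relation} gives
\begin{equation*}
 (\ell+h^\vee)(h+N)-C_{U_1,U_2}=(\ell+h^\vee)(h_3+N)-\tfrac{1}{2}C_{U_1\otimes U_2}
\end{equation*}
as endomorphisms of $U_1\otimes U_2$ for all $N\in\Z$, so the hypothesis that the right-hand side be invertible for every $N\in\Z_+$ is exactly the hypothesis of Theorem \ref{maintheorem1}.

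Next I would note that the stated map is well defined and linear, as already observed before Theorem \ref{maintheorem1}: any $\mathcal{Y}\in\mathcal{V}^{W_3}_{\gvmone\,\gvmtwo}$ restricts to a lower-truncated map $U_1\otimes U_2\to W_3\{x\}$ obeying \eqref{comm1} and \eqref{l0comm}, and since $L(0)$ acts by scalars on each $U_i$ this restriction takes the form $\sum_{m\geq 0}\mathcal{Y}_m\,x^{h+m}$ with $\mathcal{Y}_m\colon U_1\otimes U_2\to W_3(m)$ (and $\mathcal{Y}_m=0$ for $m<0$ because $W_3$ is $\N$-graded), the $n=0$ case of \eqref{commcomp} forcing $\mathcal{Y}_0$ to lie in $\mathrm{Hom}_\g(U_1\otimes U_2,W_3(0))$.

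For surjectivity I would start from $f\in\mathrm{Hom}_\g(U_1\otimes U_2,W_3(0))$ and apply Theorem \ref{maintheorem1} to obtain unique maps $\mathcal{Y}_m$ with $\mathcal{Y}_0=f$ satisfying \eqref{commcomp}; assembling $\mathcal{Y}(\cdot,x)\cdot=\sum_{m\geq 0}\mathcal{Y}_m\,x^{h+m}$ yields a lower-truncated map satisfying \eqref{comm1} (equivalent to \eqref{commcomp}) and \eqref{l0comm} (equivalent, under the scalar action of $L(0)$, to the prescribed powers of $x$), so Theorem \ref{intwopext} extends it to an intertwining operator mapping to $f$. For injectivity I would suppose $\mathcal{Y}_0\vert_{U_1\otimes U_2}=0$: the components of $\mathcal{Y}\vert_{U_1\otimes U_2}$ solve \eqref{commcomp} with vanishing degree-zero term, so the uniqueness clause of Theorem \ref{maintheorem1} (with $f=0$) forces every $\mathcal{Y}_m=0$, hence $\mathcal{Y}\vert_{U_1\otimes U_2}=0$, and then the uniqueness of the extension in Theorem \ref{intwopext} gives $\mathcal{Y}=0$.

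Since the genuine analytic content---the recursive solution of the KZ equation in the form \eqref{commcomp} and the verification of the commutator relations---already lives in Theorems \ref{maintheorem1} and \ref{intwopext}, I expect no real obstacle here; the proof is essentially bookkeeping. The one point I would take care over is that the uniqueness invoked for injectivity must be applied to the components of an \emph{arbitrary} intertwining operator, which is legitimate precisely because every such operator restricts to a solution of \eqref{commcomp} with the prescribed powers of $x$, so it falls under the uniqueness statement of Theorem \ref{maintheorem1}.
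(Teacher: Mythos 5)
Your proposal is correct and follows essentially the same route as the paper: the paper's proof likewise reduces the invertibility hypothesis to that of Theorem \ref{maintheorem1} via \eqref{eqn:Casimir_relation} and then cites Theorems \ref{intwopext} and \ref{maintheorem1} for existence and uniqueness of the intertwining operator extending a given $f$, which is exactly your surjectivity and injectivity argument. Your writeup merely makes explicit the bookkeeping (well-definedness of the map, and that the components of an arbitrary intertwining operator fall under the uniqueness clause of Theorem \ref{maintheorem1}) that the paper leaves implicit.
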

\begin{proof}
 By \eqref{eqn:Casimir_relation}, $(\ell+h^\vee)(h+N)-C_{U_1,U_2}$ is invertible if $(\ell+h^\vee)(h_3+N)-\frac{1}{2} C_{U_1\otimes U_2}$ is invertible. Then Theorems \ref{intwopext} and \ref{maintheorem1} imply that given $f\in\mathrm{Hom}_\g(U_1\otimes U_2, W_3(0))$, there is a unique intertwining operator $\mathcal{Y}$ of type $\binom{W_3}{\gvmone\,\gvmtwo}$ such that $\mathcal{Y}_0\vert_{U_1\otimes U_2} =f$. That is, $\mathcal{Y}\mapsto\mathcal{Y}_0\vert_{U_1\otimes U_2}$ is both injective and surjective.
\end{proof}

\section{Analysis of the obstructions}\label{sec:obstructions}

In this section, we discuss whether non-invertibility of $(\ell+h^\vee)(h_3+N)-\frac{1}{2}C_{U_1\otimes U_2}$ for some $N\in\Z_+$, as in Theorem \ref{maintheorem2}, is truly an obstruction to constructing intertwining operators. For simplicity, we will assume that $U_1$, $U_2$, and $U_3=W_3(0)$ are finite-dimensional irreducible $\g$-modules corresponding to dominant integral weights $\lambda_1$, $\lambda_2$, and $\lambda_3$, respectively; this will guarantee that $C_{U_1\otimes U_2}$ is diagonalizable. We will focus on the cases that $W_3$ is a generalized Verma module or its contragredient dual (see \cite[Section 5.2]{FHL}). In this setting, Theorem \ref{maintheorem2} reads:
\begin{theo}\label{thm:main_lambda_theo}
 Suppose $\lambda_1$, $\lambda_2$, and $\lambda_3$ are dominant integral weights of $\g$ and $W_3$ is an $\N$-gradable weak $\gvmzero$-module such that $W_3(0)=L_{\lambda_3}$. If moreover $2(\ell+h^\vee)(h_{\lambda_3,\ell}+N)$ is not an eigenvalue of $C_{L_{\lambda_1}\otimes L_{\lambda_2}}$ for any $N\in\Z_+$, then there is a linear isomorphism
 \begin{align*}
  \mathcal{V}^{W_3}_{V_\g(\ell,\lambda_1)\,V_\g(\ell,\lambda_2)} & \rightarrow\mathrm{Hom}_\g(L_{\lambda_1}\otimes L_{\lambda_2}, L_{\lambda_3})\nonumber\\
  \mathcal{Y} & \mapsto \mathcal{Y}_0\vert_{L_{\lambda_1}\otimes L_{\lambda_2}}
 \end{align*}
\end{theo}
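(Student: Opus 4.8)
This theorem is a specialization of Theorem \ref{maintheorem2}, so the plan is simply to check that its hypotheses hold in the present setting and that the invertibility condition there matches the stated eigenvalue condition. First I would set $U_1=L_{\lambda_1}$ and $U_2=L_{\lambda_2}$, so that the source modules $V_\g(\ell,\lambda_1)$ and $V_\g(\ell,\lambda_2)$ are exactly the generalized Verma modules $V_\g(\ell,U_1)$ and $V_\g(\ell,U_2)$; since $W_3(0)=L_{\lambda_3}$, the codomain $\mathrm{Hom}_\g(U_1\otimes U_2,W_3(0))$ becomes $\mathrm{Hom}_\g(L_{\lambda_1}\otimes L_{\lambda_2},L_{\lambda_3})$, and the map $\mathcal{Y}\mapsto\mathcal{Y}_0\vert_{U_1\otimes U_2}$ is the stated one.

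Next I would verify that $L(0)$ acts by a scalar on each of $U_1$, $U_2$, and $W_3(0)$, as required by Theorem \ref{maintheorem2}. For $U_1$ and $U_2$, which are the degree-zero spaces of $V_\g(\ell,\lambda_i)$, Remark \ref{geneigenU} and \eqref{hlambdal} give $h_i=h_{\lambda_i,\ell}$. For $W_3(0)$ the point needing slight care is that $W_3$ is only assumed to be $\N$-gradable with $W_3(0)=L_{\lambda_3}$, not a generalized Verma module; however, $\N$-gradability forces $\gamma_i(n)W_3(0)\subseteq W_3(-n)=0$ for every $n>0$, so \eqref{L0} reduces on $W_3(0)$ to $L(0)=\tfrac{1}{2(\ell+h^\vee)}\sum_{i}\gamma_i(0)^2=\tfrac{1}{2(\ell+h^\vee)}C_{L_{\lambda_3}}$. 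By irreducibility of $L_{\lambda_3}$ and \eqref{hlambdal}, this acts by the scalar $h_{\lambda_3,\ell}$, so $h_3=h_{\lambda_3,\ell}$.

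It then remains to match the invertibility hypothesis. Since $L_{\lambda_1}\otimes L_{\lambda_2}$ is finite-dimensional, $C_{L_{\lambda_1}\otimes L_{\lambda_2}}$ has a finite spectrum (indeed it is diagonalizable), so the $\g$-endomorphism $(\ell+h^\vee)(h_3+N)-\tfrac{1}{2}C_{U_1\otimes U_2}$ is invertible precisely when $(\ell+h^\vee)(h_{\lambda_3,\ell}+N)$ is not an eigenvalue of $\tfrac{1}{2}C_{L_{\lambda_1}\otimes L_{\lambda_2}}$, equivalently when $2(\ell+h^\vee)(h_{\lambda_3,\ell}+N)$ is not an eigenvalue of $C_{L_{\lambda_1}\otimes L_{\lambda_2}}$. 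This is exactly the stated hypothesis, so Theorem \ref{maintheorem2} applies directly and yields the isomorphism.

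The main obstacle here is essentially bookkeeping rather than mathematics: the entire statement follows from Theorem \ref{maintheorem2} once the modules are correctly identified. The only step that is not purely formal is confirming $h_3=h_{\lambda_3,\ell}$ for a general $\N$-gradable $W_3$ with $W_3(0)=L_{\lambda_3}$ rather than for a generalized Verma module alone, but as indicated this is immediate from \eqref{L0} together with $\N$-gradability.
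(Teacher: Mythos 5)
Your proposal is correct and matches the paper's treatment: the paper presents Theorem \ref{thm:main_lambda_theo} simply as Theorem \ref{maintheorem2} specialized to $U_1=L_{\lambda_1}$, $U_2=L_{\lambda_2}$, $W_3(0)=L_{\lambda_3}$, with the invertibility hypothesis rewritten as the eigenvalue condition on $C_{L_{\lambda_1}\otimes L_{\lambda_2}}$. Your additional check that $L(0)$ acts on $W_3(0)$ by $h_{\lambda_3,\ell}$ via \eqref{L0} and $\N$-gradability is exactly the right (and only) verification needed.
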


The easiest case to analyze is that $W_3$ is the contragredient $V_\g(\ell,\lambda_3^*)'$ of a generalized Verma module, where $\lambda_3^*$ is the dominant integral weight of $\g$ such that $L_{\lambda_3^*}\cong L_{\lambda_3}^*$. Then there are no obstructions: $\mathcal{V}^{V_\g(\ell,\lambda_3^*)'}_{V_\g(\ell,\lambda_1)\,V_\g(\ell,\lambda_2)} \cong\mathrm{Hom}_\g(L_{\lambda_1}\otimes L_{\lambda_2}, L_{\lambda_3})$ unconditionally. This follows from \cite[Theorem 2.11]{Li2}, or can be proved using Theorem \ref{intwopext}. To produce the maps $\mathcal{Y}_m: L_{\lambda_1}\otimes L_{\lambda_2}\rightarrow V_\g(\ell,\lambda_3^*)'(m)$ satisfying \eqref{commcomp} required by Theorem \ref{intwopext}, one starts with $\mathcal{Y}_0=f$ for any $f\in\mathrm{Hom}_\g(L_{\lambda_1}\otimes L_{\lambda_2}, L_{\lambda_3})$ and then recursively defines $\mathcal{Y}_m$ by
\begin{equation*}
 \langle \mathcal{Y}_m(u_1\otimes u_2), g(-n)w_3\rangle = -\langle \mathcal{Y}_{m-n}(g\cdot u_1\otimes u_2), w_3\rangle
\end{equation*}
for $u_1\in L_{\lambda_1}$, $u_2\in L_{\lambda_2}$, $g\in\g$, $1\leq n\leq m$, and $w_3\in V_\g(\ell,\lambda_3^*)(m-n)$.

Although we do not need Theorems \ref{intwopext} and \ref{maintheorem1} to determine $\mathcal{V}^{V_\g(\ell,\lambda_3^*)'}_{V_\g(\ell,\lambda_1)\,V_\g(\ell,\lambda_2)}$, they still provide information. If the conditions of Theorem \ref{thm:main_lambda_theo} hold, the construction given by Theorems \ref{intwopext} and \ref{maintheorem1} shows that the image of every intertwining operator of type $\binom{V_\g(\ell,\lambda_3^*)'}{V_\g(\ell,\lambda_1)\,V_\g(\ell,\lambda_2)}$ is contained in the $\gvmzero$-submodule of $V_\g(\ell,\lambda_3^*)'$ generated by the lowest weight space $V_\g(\ell,\lambda_3^*)'(0)=L_{\lambda_3^*}^*\cong L_{\lambda_3}$. This submodule is the radical of the unique maximal proper submodule $J_\g(\ell,\lambda_3^*)$ of $V_\g(\ell,\lambda_3^*)$ and thus is isomorphic to the contragredient $\ilambdathree$ of the irreducible quotient $L_\g(\ell,\lambda_3^*)=V_\g(\ell,\lambda_3^*)/J_\g(\ell,\lambda_3^*)$. So we have:
\begin{theo}
 Suppose $\lambda_1$, $\lambda_2$, and $\lambda_3$ are dominant integral weights of $\g$. If $2(\ell+h^\vee)(h_{\lambda_3,\ell}+N)$ is not an eigenvalue of $C_{L_{\lambda_1}\otimes L_{\lambda_2}}$ for any $N\in\Z_+$, then every intertwining operator of type $\binom{V_\g(\ell,\lambda_3^*)'}{V_\g(\ell,\lambda_1)\,V_\g(\ell,\lambda_2)}$ factors through the inclusion $\ilambdathree\hookrightarrow V_\g(\ell,\lambda_3^*)'$.
\end{theo}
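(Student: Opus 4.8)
The plan is to show that, under the eigenvalue hypothesis, every intertwining operator of the given type is reconstructed from its lowest-weight data by the recursion in the proof of Theorem~\ref{maintheorem1}, that this recursion forces the image into the $\gvmzero$-submodule generated by the lowest conformal weight space, and finally to identify that submodule with $\ilambdathree$ by contragredient duality. So first I would fix an arbitrary $\mathcal{Y}\in\mathcal{V}^{V_\g(\ell,\lambda_3^*)'}_{V_\g(\ell,\lambda_1)\,V_\g(\ell,\lambda_2)}$. The assumption that $2(\ell+h^\vee)(h_{\lambda_3,\ell}+N)$ is not an eigenvalue of $C_{L_{\lambda_1}\otimes L_{\lambda_2}}$ says precisely that $(\ell+h^\vee)(h_{\lambda_3,\ell}+N)-\frac{1}{2}C_{L_{\lambda_1}\otimes L_{\lambda_2}}$ is invertible for each $N\in\Z_+$, hence by \eqref{eqn:Casimir_relation} that $(\ell+h^\vee)(h+N)-C_{U_1,U_2}$ is invertible, where $U_i=L_{\lambda_i}$. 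Since $u_1\in L_{\lambda_1}$ and $u_2\in L_{\lambda_2}$ are annihilated by $\ghat_+$, Proposition~\ref{KZpropo} applies to the restriction $\mathcal{Y}\vert_{L_{\lambda_1}\otimes L_{\lambda_2}}$, so its components satisfy \eqref{KZcomp}, and invertibility lets me solve for them recursively via \eqref{ym}.

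Next I would prove by induction on $m$ that the image of $\mathcal{Y}_m\vert_{L_{\lambda_1}\otimes L_{\lambda_2}}$ lies in the $\gvmzero$-submodule $M:=U(\ghat)\cdot V_\g(\ell,\lambda_3^*)'(0)$ generated by the lowest conformal weight space. The base case $m=0$ holds because $\mathcal{Y}_0$ maps into $W_3(0)=V_\g(\ell,\lambda_3^*)'(0)$. For the inductive step, \eqref{ym} expresses $\mathcal{Y}_m(u_1\otimes u_2)$ as a sum of terms $\gamma_i(-k)\mathcal{Y}_{m-k}(\cdots)$ with $k\geq 1$, that is, elements of $\ghat_-$ applied to vectors already lying in $M$ by the induction hypothesis. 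To pass from the restriction to the full operator, I would invoke Theorem~\ref{intwopext}, which recovers $\mathcal{Y}$ on all of $V_\g(\ell,\lambda_1)\otimes V_\g(\ell,\lambda_2)$ from its restriction to $L_{\lambda_1}\otimes L_{\lambda_2}$ by repeated use of the commutator and iterate formulas \eqref{gcomp} and \eqref{nord}; since the modes $g(n)$ preserve $M$, the entire image of $\mathcal{Y}$ lies in $M$.

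The final step identifies $M$. Dualizing the defining exact sequence $0\to J_\g(\ell,\lambda_3^*)\to V_\g(\ell,\lambda_3^*)\to L_\g(\ell,\lambda_3^*)\to 0$, and using that the contragredient functor is exact and contravariant, I would realize $L_\g(\ell,\lambda_3^*)'$ as the socle of $V_\g(\ell,\lambda_3^*)'$, namely as the annihilator of $J_\g(\ell,\lambda_3^*)$ under the natural pairing. Since $J_\g(\ell,\lambda_3^*)$ meets the degree-zero space trivially (otherwise it would contain the irreducible $\g$-module $L_{\lambda_3^*}$ and hence generate all of $V_\g(\ell,\lambda_3^*)$), the lowest conformal weight spaces of $V_\g(\ell,\lambda_3^*)'$ and of this socle coincide; because $L_\g(\ell,\lambda_3^*)'$ is irreducible, the submodule $M$ generated by that common degree-zero space is exactly $L_\g(\ell,\lambda_3^*)'$. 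Contragredient duality for irreducible modules then gives $L_\g(\ell,\lambda_3^*)'\cong \ilambdathree$ (the lowest weight passing from $\lambda_3^*$ to $(\lambda_3^*)^*=\lambda_3$), so $\mathcal{Y}$ factors through $\ilambdathree\hookrightarrow V_\g(\ell,\lambda_3^*)'$, as claimed.

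I expect the main obstacle to be this last identification: one must verify that the submodule generated by the lowest weight space is genuinely the irreducible socle $L_\g(\ell,\lambda_3^*)'$, rather than a larger indecomposable submodule, and that all the isomorphisms respect the $\gvmzero$-module structure and not merely the underlying $\ghat$-action. By contrast, the intertwining-operator input, reconstructing $\mathcal{Y}$ through \eqref{ym}, is essentially immediate from the already-established Theorems~\ref{intwopext} and \ref{maintheorem1}.
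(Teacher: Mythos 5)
Your proposal is correct and follows essentially the same route as the paper: under the eigenvalue hypothesis the recursion \eqref{ym} together with Theorem \ref{intwopext} forces the image of any such intertwining operator into the $\gvmzero$-submodule of $V_\g(\ell,\lambda_3^*)'$ generated by its degree-zero space, and that submodule is identified with $(V_\g(\ell,\lambda_3^*)/J_\g(\ell,\lambda_3^*))'\cong\ilambdathree$ exactly as you argue. The paper compresses this into a short prose paragraph, whereas you spell out the induction and the duality identification, but the content is the same.
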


The case that $W_3=V_\g(\ell,\lambda_3)$ is more interesting. Example \ref{exam:MY_sometimes_better} in the next section will show that the conditions of Theorem \ref{thm:main_lambda_theo} are not always necessary. However, \cite[Section 8]{MY} showed some examples of genuine obstructions to the existence of intertwining operators in the case $\g=\mathfrak{sl}_2$, arising from singular vectors in $V_\g(\ell,\lambda_3)$. We now show why singular vectors can be a problem when $C_{L_{\lambda_1}\otimes L_{\lambda_2}}$ has eigenvalue(s) $2(\ell+h^\vee)(h_{\lambda_3,\ell}+N)$:
\begin{theo}\label{thm:sing_vect_candidate}
 Suppose $\lambda_1$, $\lambda_2$, and $\lambda_3$ are dominant integral weights of $\g$ and $N$ is the smallest positive integer such that $2(\ell+h^\vee)(h_{\lambda_3,\ell}+N)$ is an eigenvalue of $C_{L_{\lambda_1}\otimes L_{\lambda_2}}$. Given $f\in\mathrm{Hom}_\g(L_{\lambda_1}\otimes L_{\lambda_2}, L_{\lambda_3})$, let
 \begin{equation*}
  \mathcal{Y}_m: L_{\lambda_1}\otimes L_{\lambda_2}\rightarrow V_\g(\ell,\lambda_3)(m)
 \end{equation*}
for $1\leq m\leq N-1$ denote the maps defined by the recursive formula \eqref{ym}, starting from $\mathcal{Y}_0=f$. Then if $\sum_j u^{(1)}_j\otimes u^{(2)}_j$ an eigenvector of $C_{L_{\lambda_1}\otimes L_{\lambda_2}}$ with eigenvalue $2(\ell+h^\vee)(h_{\lambda_{3,\ell}}+N)$,
\begin{equation}\label{sing_vect_candidate}
 \sum_{i=1}^{\dim\g}\sum_j \sum_{k=1}^{N} \gamma_i(-k)\mathcal{Y}_{N-k}(\gamma_i\cdot u^{(j)}_1\otimes u^{(j)}_2)
\end{equation}
lies in the maximal proper submodule $J_\g(\ell,\lambda_3)$. Moreover, if \eqref{sing_vect_candidate} is non-zero for some eigenvector, then there is no intertwining operator of type $\binom{V_\g(\ell,\lambda_3)}{V_\g(\ell,\lambda_1)\,V_\g(\ell,\lambda_2)}$ such that $\mathcal{Y}_0\vert_{L_{\lambda_1}\otimes L_{\lambda_2}}=f$.
\end{theo}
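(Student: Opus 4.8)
Write $U_i=L_{\lambda_i}$ and let $v$ denote the vector \eqref{sing_vect_candidate}, associated to an eigenvector $w=\sum_j u^{(j)}_1\otimes u^{(j)}_2$ of $C_{U_1\otimes U_2}$ with eigenvalue $2(\ell+h^\vee)(h_{\lambda_3,\ell}+N)$. The key observation is that $v$ is precisely the right-hand side of the recursion \eqref{KZcomp} at level $m=N$, evaluated on $w$, and that by \eqref{eqn:Casimir_relation} and the eigenvalue hypothesis $w$ lies in the kernel of $C^{(N)}_{U_1,U_2}=(\ell+h^\vee)(h+N)-C_{U_1,U_2}$. The plan is therefore (i) to show that $v$ is annihilated by $\ghat_+$, so that the submodule it generates is proper and hence contained in $J_\g(\ell,\lambda_3)$, and (ii) to deduce the nonexistence statement by observing that \eqref{KZcomp} at level $N$ would force $v$ to vanish.

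Before computing, I would record that the maps $\mathcal{Y}_0,\dots,\mathcal{Y}_{N-1}$ are well behaved below level $N$. Since $N$ is the smallest positive integer for which $2(\ell+h^\vee)(h_{\lambda_3,\ell}+N)$ is an eigenvalue of $C_{U_1\otimes U_2}$, the operator $C^{(m)}_{U_1,U_2}$ is invertible for $1\leq m\leq N-1$. Consequently the induction in the proof of Theorem \ref{maintheorem1} runs unchanged up to level $N-1$, so these $\mathcal{Y}_m$ satisfy \eqref{commcomp} for $0\leq m\leq N-1$; and since \eqref{ym} is obtained from \eqref{KZcomp} by inverting $C^{(m)}_{U_1,U_2}$, they also satisfy \eqref{KZcomp} for $0\leq m\leq N-1$ (the case $m=0$ being \eqref{holdsfor0}).

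The main step is to compute $g(n)v$ for $g\in\g$ and $n\geq 1$. Here I would reuse the calculation \eqref{calc1}--\eqref{calc2} from the proof of Theorem \ref{maintheorem1}: that calculation evaluates $g(n)$ on the right-hand side of \eqref{KZcomp} and, crucially, invokes only \eqref{affalgcomm}, Lemmas \ref{lemma1} and \ref{lemma2}, and the relations \eqref{commcomp} and \eqref{KZcomp} at levels strictly below $M$; it never uses invertibility of $C^{(M)}_{U_1,U_2}$ nor the existence of $\mathcal{Y}_M$. Specializing to $M=N$ and summing over $j$, it yields $g(n)v=\sum_j\mathcal{Y}_{N-n}\big((g\otimes 1)C^{(N)}_{U_1,U_2}(u^{(j)}_1\otimes u^{(j)}_2)\big)$ for $1\leq n\leq N$. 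Since $(g\otimes 1)$ is linear and $\sum_j C^{(N)}_{U_1,U_2}(u^{(j)}_1\otimes u^{(j)}_2)=C^{(N)}_{U_1,U_2}(w)=0$, every such expression vanishes; and $g(n)v=0$ for $n>N$ for degree reasons, as $v\in V_\g(\ell,\lambda_3)(N)$. Thus $\ghat_+v=0$. As $v$ is homogeneous of positive conformal weight $h_{\lambda_3,\ell}+N$ and is killed by $\ghat_+$, the submodule $U(\ghat)v$ is contained in conformal weights $\geq h_{\lambda_3,\ell}+N$ and so meets the degree-zero generating space $L_{\lambda_3}$ trivially; it is therefore a proper submodule, and $v\in J_\g(\ell,\lambda_3)$.

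For the nonexistence assertion I would argue by contradiction. Suppose $\mathcal{Y}$ is an intertwining operator of type $\binom{V_\g(\ell,\lambda_3)}{V_\g(\ell,\lambda_1)\,V_\g(\ell,\lambda_2)}$ with $\mathcal{Y}_0\vert_{U_1\otimes U_2}=f$. By the commutator formula and Proposition \ref{KZpropo}, the components of $\mathcal{Y}\vert_{U_1\otimes U_2}$ satisfy \eqref{commcomp} and \eqref{KZcomp} for all $m\geq 0$; invertibility of $C^{(m)}_{U_1,U_2}$ for $1\leq m\leq N-1$ then forces them to coincide with the maps $\mathcal{Y}_m$ of the theorem. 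But evaluating \eqref{KZcomp} at level $N$ on $w$ gives $v=\mathcal{Y}_N\big(C^{(N)}_{U_1,U_2}(w)\big)=\mathcal{Y}_N(0)=0$, contradicting the hypothesis that $v\neq 0$. I expect the main obstacle to be the careful verification in the third paragraph that every manipulation in \eqref{calc1}--\eqref{calc2} indeed survives the specialization $M=N$, where $C^{(N)}_{U_1,U_2}$ is not invertible and $\mathcal{Y}_N$ is undefined; once this is confirmed, the vanishing of $g(n)v$ follows immediately from $C^{(N)}_{U_1,U_2}w=0$.
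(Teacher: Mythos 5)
Your proof is correct, but for the first assertion it takes a genuinely different route from the paper's. The paper shows that \eqref{sing_vect_candidate} lies in $J_\g(\ell,\lambda_3)$ indirectly: it takes $W_3=V_\g(\ell,\lambda_3^*)'$, where the intertwining operator extending $f$ exists unconditionally, so that \eqref{KZcomp} at level $N$ applied to the eigenvector forces the expression \eqref{sing_vect_candidate}, read as an element of $U(\ghat_-)\otimes L_{\lambda_3}$, to vanish in $V_\g(\ell,\lambda_3^*)'$; since the submodule of $V_\g(\ell,\lambda_3^*)'$ generated by $L_{\lambda_3}$ is identified with $(U(\ghat_-)\otimes L_{\lambda_3})/J_\g(\ell,\lambda_3)$, the conclusion follows. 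You instead verify directly that the vector $v$ is annihilated by $\ghat_+$, by rerunning \eqref{calc1}--\eqref{calc2} with $M=N$ starting from the right-hand side of \eqref{KZcomp} (so that $\mathcal{Y}_N$ is never invoked) and using $C^{(N)}_{U_1,U_2}w=0$; properness of $U(\ghat)v$ then follows because it misses the degree-zero generating space. The point you flag as the main obstacle --- that every step of \eqref{calc1}--\eqref{calc2} uses only \eqref{commcomp} and \eqref{KZcomp} at levels strictly below $M$ --- does check out: the only place level $M$ enters is the initial rewriting of $g(n)\mathcal{Y}_M\bigl(C^{(M)}_{U_1,U_2}(u_1\otimes u_2)\bigr)$ as the sum, which you correctly bypass by taking that sum as the definition of $v$. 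The paper's route is shorter and reuses what Section 4 has already established; yours is more computational but proves strictly more, namely that \eqref{sing_vect_candidate} is an honest $\ghat_+$-singular vector rather than merely an element of the maximal proper submodule, which better substantiates the phrase ``candidate for a singular vector'' in the surrounding remark. Your proof of the second assertion coincides with the paper's. One cosmetic correction: ``homogeneous of positive conformal weight'' should be ``homogeneous of degree $N\geq 1$ in the $\N$-grading''; the conformal weight $h_{\lambda_3,\ell}+N$ need not be positive, but your actual argument (the submodule generated by $v$ does not meet $V_\g(\ell,\lambda_3)(0)$) is the right one.
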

\begin{proof}
Because $N$ is the smallest positive integer such that $(\ell+h^\vee)(h+N)-C_{L_{\lambda_1},L_{\lambda_2}}$ is not invertible, \eqref{ym} is well defined for $1\leq m\leq N-1$. So if $W_3$ is any $\N$-gradable weak $\gvmzero$-module with $W_3(0)=L_{\lambda_3}$ and $\mathcal{Y}$ is an intertwining operator of type $\binom{W_3}{V_\g(\ell,\lambda_1)\,V_\g(\ell,\lambda_2)}$ such that $\mathcal{Y}_0\vert_{L_{\lambda_1}\otimes L_{\lambda_2}}=f$, then \eqref{KZcomp} implies that $\mathcal{Y}_m\vert_{L_{\lambda_1}\otimes L_{\lambda_2}}$ for $1\leq m\leq N-1$ must be given by \eqref{ym} and that \eqref{sing_vect_candidate} must vanish. The second assertion of the theorem then follows from taking $W_3=V_\g(\ell,\lambda_3)$.
 
 For the first assertion, take $W_3=V_\g(\ell,\lambda_3^*)'$. Then \eqref{sing_vect_candidate} is an expression in $U(\ghat_-)\otimes L_{\lambda_3}$ that vanishes in $V_\g(\ell,\lambda_3^*)'$ since in this case the intertwining operator $\mathcal{Y}$ exists. Since we have observed that the $\gvmzero$-submodule of $V_\g(\ell,\lambda_3^*)'$ generated by $L_{\lambda_3}$ is isomorphic to
 \begin{equation*}
  L(\ell,\lambda_3) = (U(\ghat_-)\otimes L_{\lambda_3})/J_\g(\ell,\lambda_3),
 \end{equation*}
it follows that \eqref{sing_vect_candidate} viewed as a vector in $V_\g(\ell,\lambda_3)$ lies in $J_\g(\ell,\lambda_3)$.
\end{proof}

\begin{rema}
Theorem \ref{thm:sing_vect_candidate} provides a recipe for producing candidates for singular vectors in $V_\g(\ell,\lambda_3)$. However, Example \ref{exam:MY_sometimes_better} will show that \eqref{sing_vect_candidate} can vanish even if $f\neq 0$. 
\end{rema}
\begin{rema}\label{rema:irr_level_no_ob}
 Since the eigenvalues of $C_{L_{\lambda_1}\otimes L_{\lambda_2}}$ and $C_{L_{\lambda_3}}$ are rational when the $\lambda_i$ are dominant integral weights of $\g$, obstructions to intertwining operators of type $\binom{V_\g(\ell,\lambda_3)}{V_\g(\ell,\lambda_1)\,V_\g(\ell,\lambda_2)}$ never arise if $\ell\notin\Q$. This is no surprise because in this case generalized Verma modules induced from finite-dimensional irreducible $\g$-modules are themselves irreducible and thus isomorphic to contragredients of generalized Verma modules. However, obstructions might occur for $\ell\notin\Q$ if the $\lambda_i$ are not dominant integral.
\end{rema}

\section{The case \texorpdfstring{$\g=\mathfrak{sl}_2$}{g=sl(2)} revisited}\label{sec:sl2}

We conclude by comparing Theorem \ref{thm:main_lambda_theo} in the case $\g=\mathfrak{sl}_2$ with the results of \cite{MY}, and by demonstrating some new examples of intertwining operators among generalized Verma modules for $\widehat{\mathfrak{sl}}_2$. For $p\in\N$, we let $L_p$ denote the $(p+1)$-dimensional irreducible $\mathfrak{sl}_2$-module of highest weight $p\frac{\alpha}{2}$ and let $\gvmp$ denote the corresponding generalized Verma module for $\widehat{\mathfrak{sl}}_2$. As mentioned in Remark \ref{rema:irr_level_no_ob}, we always have
\begin{equation*}
 \mathcal{N}^{\gvmr}_{\gvmp\,\gvmq} =\dim\mathrm{Hom}_{\mathfrak{sl}_2}(L_p\otimes L_q, L_r)
\end{equation*}
when $\ell\notin\Q$. For rational levels, we express Theorem \ref{thm:main_lambda_theo} as follows:
\begin{theo}\label{thm:sl2_theo}
 For $\ell\in\Q\setminus\lbrace-2\rbrace$ and $p,q,r\in\N$, the fusion rule $\mathcal{N}^{\gvmr}_{\gvmp\,\gvmq} =1$ under the following conditions:
 \begin{enumerate}
  \item $r=p+q-2n$ with $0\leq n\leq\min(p,q)$, and
  \item $m(m+r+1)\notin (\ell+2)\Z_+$ for $n-\min(p,q)\leq m\leq n$.
 \end{enumerate}
 Moreover, if $\ell+2\in\Q_{+}$, then we may replace condition (2) with the condition:
 \begin{enumerate}
  \item[(2')]  $m(m+r+1)\notin (\ell+2)\Z_+$ for $1\leq m\leq n$
 \end{enumerate}

\end{theo}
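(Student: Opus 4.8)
The plan is to apply Theorem~\ref{thm:main_lambda_theo} with $\lambda_1=p\tfrac{\alpha}{2}$, $\lambda_2=q\tfrac{\alpha}{2}$, and $\lambda_3=r\tfrac{\alpha}{2}$, recalling that $h^\vee=2$ for $\g=\mathfrak{sl}_2$, so that $\ell+h^\vee=\ell+2$. First I would record the two pieces of representation theory that Theorem~\ref{thm:main_lambda_theo} feeds on. The Clebsch--Gordan rule gives the multiplicity-free decomposition $L_p\otimes L_q\cong\bigoplus_{m=0}^{\min(p,q)}L_{p+q-2m}$, so that $\dim\mathrm{Hom}_{\mathfrak{sl}_2}(L_p\otimes L_q,L_r)$ equals $1$ exactly when condition~(1) holds and equals $0$ otherwise. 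The same decomposition diagonalizes $C_{L_p\otimes L_q}$, the summand $L_{p+q-2m}$ contributing the eigenvalue $C_{L_{p+q-2m}}$; and applying \eqref{hlambdal} to $\lambda=s\tfrac{\alpha}{2}$ gives $h_{\lambda,\ell}=\tfrac{s(s+2)}{4(\ell+2)}$, whence $C_{L_s}=2(\ell+2)h_{\lambda,\ell}=\tfrac12 s(s+2)$ for each $s\in\N$.

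The core step is to match condition~(2) with the invertibility hypothesis of Theorem~\ref{thm:main_lambda_theo}, namely that $2(\ell+2)(h_{\lambda_3,\ell}+N)=\tfrac12 r(r+2)+2(\ell+2)N$ is not an eigenvalue of $C_{L_p\otimes L_q}$ for any $N\in\Z_+$. Setting this equal to the eigenvalue $\tfrac12 s(s+2)$ coming from $L_{p+q-2m}$, where $s=p+q-2m$, and using the factorization $s(s+2)-r(r+2)=(s-r)(s+r+2)$, I would reduce the coincidence to $(\ell+2)N=(n-m)(p+q-m-n+1)$ after substituting $r=p+q-2n$ from condition~(1). Reindexing by $m'=n-m$ and using $r=p+q-2n$ once more rewrites the right-hand side as $m'(m'+r+1)$, and as $m$ runs over $0\le m\le\min(p,q)$ the index $m'$ runs over $n-\min(p,q)\le m'\le n$. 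Thus a forbidden coincidence occurs for some $N\in\Z_+$ if and only if $m'(m'+r+1)\in(\ell+2)\Z_+$ for some such $m'$, which is precisely the negation of condition~(2). Hence under (1) and (2), Theorem~\ref{thm:main_lambda_theo} gives $\mathcal{V}^{\gvmr}_{\gvmp\,\gvmq}\cong\mathrm{Hom}_{\mathfrak{sl}_2}(L_p\otimes L_q,L_r)$, and the Clebsch--Gordan computation above yields $\mathcal{N}^{\gvmr}_{\gvmp\,\gvmq}=1$.

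For the final assertion I would show that, when $\ell+2\in\Q_+$, conditions (2) and (2') are equivalent given (1). Since the range $1\le m\le n$ sits inside $n-\min(p,q)\le m\le n$, condition (2) implies (2') trivially; for the converse I must rule out the extra indices $m$ with $n-\min(p,q)\le m\le 0$. For these I claim $m(m+r+1)\le 0$, which can never lie in the set of positive numbers $(\ell+2)\Z_+$: the index $m=0$ gives $0$, and for negative $m$ it suffices that $m+r+1\ge 0$, i.e.\ $m\ge-(r+1)$. The smallest index satisfies $n-\min(p,q)\ge-(r+1)$ because, after substituting $r=p+q-2n$, this rearranges to $\max(p,q)-n+1\ge 0$, which follows from $n\le\min(p,q)\le\max(p,q)$. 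So the additional constraints in (2) are automatic once $\ell+2>0$, and (2') together with (1) already gives the fusion rule $1$.

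The only place demanding genuine care is the bookkeeping in the second paragraph: the identification of condition~(2) rests on the exact factorization $s(s+2)-r(r+2)=(s-r)(s+r+2)$ and on tracking how the Clebsch--Gordan summand index transforms under $m'=n-m$ so that its range becomes exactly $[\,n-\min(p,q),\,n\,]$. The sign analysis separating (2) from (2') is elementary, but it uses the inequality $n\le\min(p,q)$ supplied by condition~(1) and so can only be carried out after (1) is in force.
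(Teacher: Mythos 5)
Your proposal is correct and follows essentially the same route as the paper: apply Theorem \ref{thm:main_lambda_theo} via the Clebsch--Gordan decomposition of $L_p\otimes L_q$, translate the Casimir eigenvalue condition into $m(m+r+1)\notin(\ell+2)\Z_+$ with $m=n-k$, and observe that for $\ell+2>0$ the indices $m\le 0$ give a non-positive left-hand side and so impose no constraint. The only cosmetic difference is that you organize the algebra through the factorization $s(s+2)-r(r+2)=(s-r)(s+r+2)$, and you make explicit the verification $n-\min(p,q)\ge-(r+1)$ that the paper leaves implicit.
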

\begin{proof}
 The first condition guarantees that $\dim\mathrm{Hom}_{\mathfrak{sl}_2}(L_p\otimes L_q, L_r)=1$, so we just need to check that the second condition(s) guarantee that $2(\ell+h^\vee)(h_{r,\ell}+N)$ is not an eigenvalue of $C_{L_p\otimes L_q}$ for any $N\in\Z_+$. Recalling \eqref{hlambdal} and using $h^\vee=2$ for $\mathfrak{sl}_2$ and
 \begin{equation*}
  L_p\otimes L_q\cong\bigoplus_{k=0}^{\min(p,q)} L_{p+q-2k},
 \end{equation*}
the conditions of Theorem \ref{thm:main_lambda_theo} amount to
\begin{equation*}
 \frac{(p+q-2k)(p+q-2k+2)}{2} \neq 2(\ell+2)\left(\frac{(p+q-2n)(p+q-2n+2)}{4(\ell+2)}+N\right)
\end{equation*}
for $0\leq k\leq\min(p,q)$ and any $N\in\Z_+$. This simplifies to
\begin{equation}\label{eqn:sl2_theo_cond}
 (n-k)(p+q-n-k+1)=(n-k)(r+n-k+1)\notin(\ell+2)\Z_+.
\end{equation}
 Setting $m=n-k$, we get
\begin{equation*}
 m(m+r+1)\notin(\ell+2)\Z_+
\end{equation*}
for $n-\min(p,q)\leq m\leq n$, as desired. If $\ell+2\in\Q_+$, then the right side of \eqref{eqn:sl2_theo_cond} is strictly positive while the left side is non-positive unless $0\leq k\leq n-1$, so we may restrict to $1\leq m\leq n$.
\end{proof}
\begin{exam}\label{exam:sl2_theo}
 For $\ell+2\in\Q_+$, we determine when obstructions to intertwining operators can possibly occur in the cases $n=0,1,2$ of Theorem \ref{thm:sl2_theo}:
 \begin{itemize}
  \item For $p,q\in\N$, $\mathcal{N}^{V_{\mathfrak{sl}_2}(\ell,p+q)}_{\gvmp\,\gvmq} =1$ unconditionally.
  \item For $p,q\in\N$ such that $\min(p,q)\geq 1$, $\mathcal{N}^{V_{\mathfrak{sl}_2}(\ell,p+q-2)}_{\gvmp\,\gvmq} =1$ except possibly when $p+q\in(\ell+2)\Z_+$.
  \item For $p,q\in\N$ such that $\min(p,q)\geq 2$, $\mathcal{N}^{V_{\mathfrak{sl}_2}(\ell,p+q-4)}_{\gvmp\,\gvmq} =1$ except possibly when $2(p+q-1)\in(\ell+2)\Z_+$ or $p+q-2\in(\ell+2)\Z_+$.
 \end{itemize}
\end{exam}

Theorem \ref{thm:sl2_theo} is similar to but somewhat different from the intertwining operator theorems of \cite{MY}. In \cite[Theorem 6.1]{MY}, we assumed that the maximal proper submodule of $\gvmr$ was irreducible and isomorphic to some $L_{\mathfrak{sl}_2}(\ell,r')$, and that the maximal proper submodule of $V_{\mathfrak{sl}_2}(\ell,r')$ was irreducible and isomorphic to some $L_{\mathfrak{sl}_2}(\ell,r'')$. Under these assumptions, we proved that $\mathcal{V}^{\gvmr}_{\gvmp\,\gvmq}\cong\mathrm{Hom}_{\mathfrak{sl}_2}(L_p\otimes L_q, L_r)$ provided
\begin{equation*}
 \mathrm{Hom}_{\mathfrak{sl}_2}(L_p\otimes L_q, L_{r'})=\mathrm{Hom}_{\mathfrak{sl}_2}(L_p\otimes L_q, L_{r''})=0,
\end{equation*}
that is, provided $2(\ell+2)h_{r',\ell}$ and $2(\ell+2)h_{r'',\ell}$ are not eigenvalues of $C_{L_p\otimes L_q}$. Now, since $h_{r',\ell}$ is the lowest conformal weight of a proper non-zero submodule of $\gvmr$, we must have $h_{r',\ell}=h_{r,\ell}+N'$ for some $N'\in\Z_+$, and then similarly $h_{r'',\ell}=h_{r,\ell}+N''$ for some $N''$. Thus \cite[Theorem 6.1]{MY} implies Theorem \ref{thm:sl2_theo}, but only for $\gvmr$ that satisfy the irreducibility assumptions on maximal proper submodules.

In practice, we know from \cite[Theorem 3.8]{MY} that $J_{\mathfrak{sl}_2}(\ell,r)\cong L_{\mathfrak{sl}_2}(\ell,r')$ and $J_{\mathfrak{sl}_2}(\ell,r')\cong L_{\mathfrak{sl}_2}(\ell,r'')$ when $\ell\in\N$ and $\gvmr$ appears in the Garland-Lepowsky resolutions \cite{GL} of integrable highest-weight $\widehat{\mathfrak{sl}}_2$-modules. Specifically, this means
\begin{equation}\label{eqn:mjn_def}
 r=m(j,n)=(\ell+2)j+\frac{\ell}{2}(1-(-1)^j)+(-1)^j n
\end{equation}
with $j\geq 0$ and $0\leq n\leq\ell$ (see \cite[Proposition 8.2]{MY}), and then $r'=m(j+1,n)$, $r''=m(j+2,n)$. For such $r$, the next example shows that \cite[Theorem 6.1]{MY} can be stronger than Theorem \ref{thm:sl2_theo}, that is, the eigenvalue conditions of Theorem \ref{thm:main_lambda_theo} are not always necessary for generalized Verma modules.
\begin{exam}\label{exam:MY_sometimes_better}
 For $\ell\in 2\Z_+$, take the $\mathfrak{sl}_2$-homomorphism $L_{\ell/2}\otimes L_{\ell/2}\rightarrow L_0\cong\C$ given by a non-zero invariant bilinear form $\langle\cdot,\cdot\rangle_{\ell/2}$. Then $r=0$, $r'=2\ell+2$, and $r''=2\ell+4$, so
 \begin{equation*}
  \mathrm{Hom}_{\mathfrak{sl}_2}(L_{\ell/2}\otimes L_{\ell/2}, L_{r'})=\mathrm{Hom}_{\mathfrak{sl}_2}(L_{\ell/2}\otimes L_{\ell/2}, L_{r''})=0
 \end{equation*}
and by \cite[Theorem 6.1]{MY} there is a unique intertwining operator $\mathcal{Y}$ of type $\binom{V_{\mathfrak{sl}_2}(\ell,0)}{V_{\mathfrak{sl}_2}(\ell,\ell/2)\,V_{\mathfrak{sl}_2}(\ell,\ell/2)}$ such that for $u_1, u_2\in L_{\ell/2}$,
\begin{equation*}
 \mathcal{Y}_0(u_1\otimes u_2)=\langle u_1,u_2\rangle_{\ell/2}.
\end{equation*}
But condition (2') of Theorem \ref{thm:sl2_theo} fails when $\ell\in 4\Z_+$: for $m=n=\ell/2$, we have
\begin{equation*}
 m(m+r+1)=\frac{\ell}{2}\left(\frac{\ell}{2}+0+1\right)=(\ell+2)\frac{\ell}{4}\in(\ell+2)\Z_+.
\end{equation*}
This means that $2(\ell+2)\left(h_{0,\ell}+\frac{\ell}{4}\right)$ is an eigenvalue of $C_{L_{\ell/2}\otimes L_{\ell/2}}$ (note that $h_{0,\ell}=0$). 

When $\ell=4$, it is easy to check that the candidate \eqref{sing_vect_candidate} for a singular vector in $V_{\mathfrak{sl}_2}(\ell,0)$ vanishes. Here, $L_{\ell/2}\cong\mathfrak{sl}_2$ with standard basis $\lbrace e,h,f\rbrace$, we scale $\langle\cdot,\cdot\rangle_{\ell/2}=\langle\cdot,\cdot\rangle$ so that $\langle h,h\rangle =2$ and $\langle e,f\rangle=\langle f,e\rangle=1$, and the appropriately-scaled Casimir operator is $ef+\frac{1}{2}h^2+fe$. The most interesting eigenvector of $C_{\mathfrak{sl}_2\otimes\mathfrak{sl}_2}$ with eigenvalue $2(\ell+2)\left(0+\frac{\ell}{4}\right)=12$ to check is $e\otimes h+h\otimes e$: then \eqref{sing_vect_candidate} becomes
\begin{align*}
 e(-1) & \left(\langle [f,h],e\rangle+\langle h,[f,e]\rangle\right)+\frac{1}{2} h(-1)\left(\langle [h,h],e\rangle+\langle [h,e],h\rangle\right)\nonumber\\ &\hspace{6em}+f(-1)\left(\langle [e,h],e\rangle+\langle[e,e],h\rangle\right)\nonumber\\
 & = e(-1)(2\langle f,e\rangle-\langle h,h\rangle) +h(-1)\langle e,h\rangle -2 f(-1)\langle e,e\rangle =0.
\end{align*}
\end{exam}

Although this example shows that \cite{MY} can give better results than Theorem \ref{thm:sl2_theo}, Theorem \ref{thm:sl2_theo} is usually more versatile. Especially, Theorem \ref{thm:sl2_theo} applies to any level $\ell\in\mathbb{Q}\setminus\lbrace -2\rbrace$ and any weight $r\in\N$. For example, we can take $r$ for which $\gvmr$ does not appear in the Garland-Lepowsky resolutions: from \eqref{eqn:mjn_def}, these are the positive integers $(\ell+2)j-1$ with $j\geq 1$. Then Example \ref{exam:sl2_theo} provides for instance many new examples of non-zero intertwining operators of type $\binom{V_{\mathfrak{sl}_2}(\ell,(\ell+2)j-1)}{\gvmp\,\gvmq}$ where $p+q=(\ell+2)j-1$. Moreover, when the conditions of Theorem \ref{thm:sl2_theo} fail, we can use Theorem \ref{thm:sing_vect_candidate} to compute candidates for singular vectors in $V_{\mathfrak{sl}_2}(\ell,(\ell+2)j-1)$. For example $\ell=0$, $p=2$, $q=3$, $r=1$ yields a singular vector candidate of conformal weight $\frac{35}{8}$ in $V_{\mathfrak{sl}_2}(0,1)$. It would be interesting to check if \eqref{sing_vect_candidate} is non-zero in this case, although the calculations would be involved.

Another advantage of Theorem \ref{thm:sl2_theo}, or rather Theorem \ref{maintheorem2}, is that we can use it to construct intertwining operators among generalized Verma modules induced from irreducible infinite-dimensional $\mathfrak{g}$-modules $U_1$, $U_2$, and $U_3$, at least as long as $C_{U_1\otimes U_2}$ is diagonalizable. As discussed in the Introduction, understanding such intertwining operators for affine Lie algebras at admissible rational levels is of considerable interest. We conclude this paper by discussing some extensions of Theorem \ref{thm:sl2_theo} to infinite-dimensional $\mathfrak{sl}_2$-modules.

For $\lambda\in\mathbb{C}$, let $L_\lambda$ denote the irreducible highest-weight $\mathfrak{sl}_2$-module with highest weight $\lambda\frac{\alpha}{2}$. When $\lambda\in\C\setminus\N$, $L_\lambda$ is also a Verma module. If $p\in\N$ and $\lambda,p+\lambda\in\C\setminus\N$, we have
\begin{equation*}
 L_p\otimes L_\lambda =\bigoplus_{k=0}^p L_{p+\lambda-2k};
\end{equation*}
this means $C_{L_p\otimes L_\lambda}$ is diagonalizable with eigenvalues
\begin{equation*}
 \frac{1}{2}(p+\lambda-2k)(p+\lambda-2k+2),\qquad 0\leq k\leq p.
\end{equation*}
Similarly, if $\lambda_1,\lambda_2,\lambda_1+\lambda_2\in\C\setminus\N$, then
\begin{equation*}
 L_{\lambda_1}\otimes L_{\lambda_2} =\bigoplus_{k=0}^{\infty} L_{\lambda_1+\lambda_2-2k},
\end{equation*}
so $C_{L_{\lambda_1}\otimes L_{\lambda_2}}$ is diagonalizable with eigenvalues
\begin{equation*}
 \frac{1}{2}(\lambda_1+\lambda_2-2k)(\lambda_1+\lambda_2-2k+2),\qquad k\in\N.
\end{equation*}
In both these cases, the calculation in the proof of Theorem \ref{thm:sl2_theo} leads to the same condition \eqref{eqn:sl2_theo_cond}. So we get the following theorem:
\begin{theo}\label{thm:inf_sl2_theo}
 If $p\in\N$ and $\lambda,p+\lambda\in\C\setminus\N$, then the fusion rule $\mathcal{N}_{V_{\mathfrak{sl}_2}(\ell,p)\,V_{\mathfrak{sl}_2}(\ell,\lambda)}^{V_{\mathfrak{sl}_2}(\ell,\mu)} =1$ under the following conditions:
 \begin{enumerate}
  \item $\mu=p+\lambda-2n$ with $0\leq n\leq p$, and
  \item $m(m+\mu+1)\notin(\ell+2)\Z_+$ for $m=n, n-1,\ldots, n-p$.
 \end{enumerate}
If $\lambda_1,\lambda_2,\lambda_1+\lambda_2\in\C\setminus\N$, then the fusion rule $\mathcal{N}_{V_{\mathfrak{sl}_2}(\ell,\lambda_1)\,V_{\mathfrak{sl}_2}(\ell,\lambda_2)}^{V_{\mathfrak{sl}_2}(\ell,\lambda_3)} =1$ under the following conditions:
 \begin{enumerate}
  \item $\lambda_3=\lambda_1+\lambda_2-2n$ with $n\in\N$, and
  \item $m(m+\lambda_3+1)\notin(\ell+2)\Z_+$ for $m =n,n-1,n-2,\ldots$.
 \end{enumerate}
\end{theo}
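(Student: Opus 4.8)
The plan is to deduce both statements directly from Theorem~\ref{maintheorem2}, exactly as Theorem~\ref{thm:sl2_theo} was deduced from Theorem~\ref{thm:main_lambda_theo}; the only new feature is that the modules $U_1,U_2$ are now allowed to be infinite dimensional. This is harmless, since Theorem~\ref{maintheorem2} requires only that $L(0)$ act by a scalar on each of $U_1$, $U_2$, $W_3(0)$ and that the hypothesis on $C_{U_1\otimes U_2}$ hold, and all of these persist for irreducible highest-weight $\mathfrak{sl}_2$-modules. Taking $U_1=L_p$, $U_2=L_\lambda$, $W_3=V_{\mathfrak{sl}_2}(\ell,\mu)$ (so $W_3(0)=L_\mu$) in the first case, and $U_1=L_{\lambda_1}$, $U_2=L_{\lambda_2}$, $W_3=V_{\mathfrak{sl}_2}(\ell,\lambda_3)$ (so $W_3(0)=L_{\lambda_3}$) in the second, Theorem~\ref{maintheorem2} identifies the fusion rule with $\dim\mathrm{Hom}_{\mathfrak{sl}_2}(U_1\otimes U_2,W_3(0))$ as soon as the invertibility hypothesis is verified. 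So there are two things to check: the dimension of the Hom space, and the invertibility.

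For the Hom space, I would use the decompositions recorded just above the theorem together with Schur's lemma. In both cases the summands $L_{a-2k}$ (with $a=p+\lambda$ or $a=\lambda_1+\lambda_2$) are pairwise non-isomorphic irreducible (Verma) modules: distinct values of $k$ give distinct highest weights $(a-2k)\frac{\alpha}{2}$, and $a\notin\N$ forces every $a-2k\notin\N$, so each summand is irreducible. Hence $\mathrm{Hom}_{\mathfrak{sl}_2}(U_1\otimes U_2,W_3(0))$ is one-dimensional precisely when $W_3(0)$ is isomorphic to exactly one summand, i.e.\ when $\mu=p+\lambda-2n$ with $0\le n\le p$ (respectively $\lambda_3=\lambda_1+\lambda_2-2n$ with $n\in\N$); this is condition~(1) in each case.

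For invertibility, since $C_{U_1\otimes U_2}$ is diagonalizable with the eigenvalues listed above, the operator $(\ell+2)(h_{\mu,\ell}+N)-\tfrac{1}{2} C_{U_1\otimes U_2}$ is a nonzero scalar on each eigenspace, hence invertible, precisely when $2(\ell+2)(h_{\mu,\ell}+N)$ avoids every eigenvalue of $C_{U_1\otimes U_2}$ for all $N\in\Z_+$. Substituting $h_{r,\ell}=\tfrac{r(r+2)}{4(\ell+2)}$ from \eqref{hlambdal} and the eigenvalue $\tfrac{1}{2}(a-2k)(a-2k+2)$, the requirement becomes $(\ell+2)N\neq\tfrac{1}{4}\big[(a-2k)(a-2k+2)-\mu(\mu+2)\big]$, which is the identical algebra that produced \eqref{eqn:sl2_theo_cond}: writing $a=\mu+2n$ and $m=n-k$ it collapses to $m(m+\mu+1)\notin(\ell+2)\Z_+$. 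Tracking the range of $k$ then gives condition~(2): $k$ running over $0\le k\le p$ yields $m=n,n-1,\ldots,n-p$ in the finite case, while $k$ running over $\N$ yields $m=n,n-1,n-2,\ldots$ in the infinite case. (The value $m=0$, coming from $k=n$, never violates the condition, since $0\notin(\ell+2)\Z_+$.)

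I expect no serious obstacle, as the argument is a reduction to already-proved statements plus the bookkeeping of the previous proof. The one point requiring care is the infinite direct sum in the second case: there the invertibility hypothesis is an infinite family of conditions, one for each $N\in\Z_+$ and each eigenvalue, but the closed-form simplification captures all of them simultaneously in the single family $m(m+\lambda_3+1)\notin(\ell+2)\Z_+$ for $m\le n$. I would also confirm explicitly that diagonalizability of $C_{U_1\otimes U_2}$ on the infinite-dimensional space $U_1\otimes U_2$ genuinely yields invertibility of $(\ell+2)(h_{\lambda_3,\ell}+N)-\tfrac{1}{2} C_{U_1\otimes U_2}$ (it does, since the operator acts block-diagonally as a nonzero scalar on each irreducible summand), so that Theorem~\ref{maintheorem2} applies verbatim.
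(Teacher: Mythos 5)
Your proposal is correct and follows exactly the route the paper takes: the paper's proof of this theorem consists precisely of the decompositions and Casimir eigenvalue lists recorded just above the statement, followed by the observation that the algebra from the proof of Theorem \ref{thm:sl2_theo} reduces the invertibility hypothesis of Theorem \ref{maintheorem2} to condition (2) in each case. Your filling-in of the details (Schur's lemma for the one-dimensionality of the Hom space, the substitution $m=n-k$ yielding $m(m+\mu+1)\notin(\ell+2)\Z_+$, and the remark that block-diagonal nonzero-scalar action on an infinite direct sum still gives invertibility) is accurate and is exactly what the paper leaves implicit.
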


Note that it will be easy for condition (2) in the second assertion of the theorem to fail if $\lambda_3$ and  $\ell$ are both rational, since there is no upper bound for the expression $m(m+\lambda_3+1)$. However, the first assertion of the theorem is easy to apply in many cases. As an example, we will look at an admissible level for $\mathfrak{sl}_2$.  Admissible levels for $\mathfrak{sl}_2$ have the form $\ell=-2+\frac{u}{v}$ where $u\geq 2$ and $v\geq 1$ are relatively prime integers. For $\ell$ an admissible level, irreducible highest-weight modules for the simple affine vertex operator algebra $L_{\mathfrak{sl}_2}(\ell,0)$ were classified in \cite{AM, DLM}; they are the simple modules $L_{\mathfrak{sl}_2}(\ell,\lambda_{r,s})$ where 
 \begin{equation*}
  \lambda_{r,s}= r-1-(\ell+2)s,
 \end{equation*}
for $1\leq r\leq u-1$ and $0\leq s\leq v-1$. Fusion rules involving these irreducible highest-weight modules $L_{\mathfrak{sl}_2}(\ell,\lambda_{r,s})$ are known \cite{BF, DLM}. Here we look at intertwining operators involving generalized Verma modules at one of the simplest non-integral admissible levels, $\ell=-2+\frac{3}{2}=-\frac{1}{2}$:
\begin{exam}\label{exam:adm_hw}
For $\ell=-\frac{1}{2}$, the admissible $\mathfrak{sl}_2$-weights $\lambda_{r,s}$ are given by
\begin{equation*}
 \lambda=0,1,-\frac{3}{2},-\frac{1}{2}.
\end{equation*}
Looking at homomorphisms involving irreducible $\mathfrak{sl}_2$-modules with these highest weights, we see that there might be non-zero intertwining operators of types $\binom{V_{\mathfrak{sl}_2}(-\frac{1}{2},-\frac{1}{2})}{V_{\mathfrak{sl}_2}(-\frac{1}{2},1)\,V_{\mathfrak{sl}_2}(-\frac{1}{2},-\frac{3}{2})}$ and $\binom{V_{\mathfrak{sl}_2}(-\frac{1}{2},-\frac{3}{2})}{V_{\mathfrak{sl}_2}(-\frac{1}{2},1)\,V_{\mathfrak{sl}_2}(-\frac{1}{2},-\frac{1}{2})}$. These are the $\ell=-\frac{1}{2}$, $p=1$, $\lambda=-\frac{3}{2}$, $n=0$ and $\ell=-\frac{1}{2}$, $p=1$, $\lambda=-\frac{1}{2}$, $n=1$ cases of Theorem \ref{thm:inf_sl2_theo}. For the first case, we check
\begin{align*}
 0\left(0-\frac{1}{2}+1\right) = 0\notin\frac{3}{2}\Z_+,\qquad
 -1\left(-1-\frac{1}{2}+1\right) =\frac{1}{2}\notin\frac{3}{2}\Z_+,
\end{align*}
and for the second case, we check
\begin{equation*}
 1\left(1-\frac{3}{2}+1\right) =\frac{1}{2}\notin\frac{3}{2}\Z_+,\qquad 0\left(0-\frac{3}{2}+1\right) =0\notin\frac{3}{2}\Z_+.
\end{equation*}
So Theorem \ref{thm:inf_sl2_theo} implies that indeed $\mathcal{N}_{V_{\mathfrak{sl}_2}(-1/2,1)\,V_{\mathfrak{sl}_2}(-1/2,-3/2)}^{V_{\mathfrak{sl}_2}(-1/2,-1/2)} =\mathcal{N}_{V_{\mathfrak{sl}_2}(-1/2,1)\,V_{\mathfrak{sl}_2}(-1/2,-1/2)}^{V_{\mathfrak{sl}_2}(-1/2,-3/2)}=1$.

%
%
%

We can now project onto irreducible quotient modules to get non-zero intertwining operators
\begin{equation*}
 \mathcal{Y}: V_{\mathfrak{sl}_2}(-1/2,1)\otimes V_{\mathfrak{sl}_2}(-1/2,-3/2)\rightarrow L_{\mathfrak{sl}_2}(-1/2,-1/2)\lbrace x\rbrace
\end{equation*}
and
\begin{equation*}
 \mathcal{Y}: V_{\mathfrak{sl}_2}(-1/2,1)\otimes V_{\mathfrak{sl}_2}(-1/2,-1/2)\rightarrow L_{\mathfrak{sl}_2}(-1/2,-3/2)\lbrace x\rbrace.
\end{equation*}
 As suggested in the Introduction, we could recover the known intertwining operators among irreducible highest-weight $\widehat{\mathfrak{sl}}_2$-modules if we could show that the above intertwining operators remain well defined on the irreducible quotients of the generalized Verma modules. This would require information about the singular vectors generating the maximal proper submodules of the generalized Verma modules (and such information is what was used for the fusion rule calculations in \cite{DLM}).
\end{exam}

So far, we have considered $\widehat{\mathfrak{sl}}_2$-modules that are induced from highest-weight irreducible $\mathfrak{sl}_2$-modules. But relaxed highest-weight modules are also important: these are (quotients of) generalized Verma modules induced from more general irreducible weight $\mathfrak{sl}_2$-modules. For $\mathfrak{sl}_2$, these additional weight modules include lowest-weight modules and the irreducible modules $E_{\overline{\lambda},\delta}$ for certain $\overline{\lambda}=\lambda+2\Z\in\C/2\Z$ and $\delta\in\C$. The $\mathfrak{sl}_2$-module $E_{\overline{\lambda},\delta}$ has a one-dimensional weight space of weight $\lambda\frac{\alpha}{2}$ for each $\lambda\in\overline{\lambda}$, and $\delta$ is the eigenvalue of the Casimir operator $ef+\frac{1}{2}h^2+fe$. That is, for $v_\lambda\in E_{\overline{\lambda},\delta}$ of weight $\lambda\frac{\alpha}{2}$, 
\begin{equation*}
 ef\cdot v_\lambda =\frac{1}{2}\left(\delta-\frac{1}{2}\lambda(\lambda-2)\right).
\end{equation*}
In particular, we must have $\delta\neq\frac{1}{2}\lambda(\lambda-2)$ for all $\lambda\in\overline{\lambda}$ to avoid highest- or lowest-weight vectors in $E_{\overline{\lambda},\delta}$. In our last example, we again consider the admissible level $\ell=-\frac{1}{2}$:
\begin{exam}
 We refer to \cite[Section 2]{CR1} for more details on fusion rules for irreducible relaxed highest-weight $\widehat{\mathfrak{sl}}_2$-modules at $\ell=-\frac{1}{2}$. The irreducible $\widehat{\mathfrak{sl}}_2$-module $L_{\mathfrak{sl}_2}(-\frac{1}{2}, E_{\overline{\lambda},\delta})$ is an $L_{\mathfrak{sl}_2}(-\frac{1}{2},0)$-module for $\delta=-\frac{3}{8}$ and $\overline{\lambda}\neq\pm\frac{1}{2}+2\Z$. According to \cite[Equation (2.11)]{CR1}, there is a one-dimensional space of intertwining operators of type $\binom{L_{\mathfrak{sl}_2}(-\frac{1}{2}, E_{\overline{\lambda+1},-3/8})}{L_{\mathfrak{sl}_2}(-\frac{1}{2}, 1)\,L_{\mathfrak{sl}_2}(-\frac{1}{2}, E_{\overline{\lambda},-3/8})}$ for all such $\lambda$. We shall show that the same result holds for the corresponding generalized Verma modules for $V_{\mathfrak{sl}_2}(-\frac{1}{2},0)$.
 
 Note first that $L_1\otimes E_{\overline{\lambda},\delta}$ is a weight $\mathfrak{sl}_2$-module with two-dimensional weight spaces; the $\mathfrak{sl}_2$-weights are of the form $(\lambda+1)\frac{\alpha}{2}$ for $\lambda\in\overline{\lambda}$. A calculation shows that the Casimir operator on $L_1\otimes E_{\overline{\lambda},\delta}$ has eigenvalue(s)
 \begin{equation*}
  \delta_\pm =\frac{1}{2}(2\delta+1)\pm\sqrt{2\delta+1}
 \end{equation*}
on each weight space, independently of $\lambda\in\overline{\lambda}$. Thus for $\delta=-\frac{3}{8}$ the Casimir is diagonalizable with eigenvalues $-\frac{3}{8}, \frac{5}{8}$. As the $-\frac{3}{8}$-eigenspace is a direct summand of $L_1\otimes E_{\overline{\lambda},-3/8}$ which must be isomorphic to $E_{\overline{\lambda+1},-3/8}$, we see that
\begin{equation*}
\dim\mathrm{Hom}_{\mathfrak{sl}_2}( L_1\otimes E_{\overline{\lambda},-3/8}, E_{\overline{\lambda+1},-3/8})=1.
\end{equation*}
Thus to apply Theorem \ref{maintheorem2}, we just need to show that $\delta+2(\ell+h^\vee)N$ is not an eigenvalue of the Casimir operator for $N\in\Z_+$. In fact,
\begin{equation*}
 \delta+2(\ell+h^\vee)N=-\frac{3}{8}+2\left(-\frac{1}{2}+2\right)N=-\frac{3}{8}+3N\neq -\frac{3}{8},\frac{5}{8}
\end{equation*}
when $N\in\Z_+$, so we conclude that
\begin{equation*}
 \mathcal{N}_{V_{\mathfrak{sl}_2}(-\frac{1}{2},1)\,V_{\mathfrak{sl}_2}(-\frac{1}{2},E_{\overline{\lambda},-3/8})}^{V_{\mathfrak{sl}_2}(-\frac{1}{2},E_{\overline{\lambda+1},-3/8})} =1
\end{equation*}
for all $\overline{\lambda}\neq\pm\frac{1}{2}+2\Z$.

Just as in Example \ref{exam:adm_hw}, we can also get a one-dimensional space of intertwining operators
\begin{equation*}
 \mathcal{Y}: V_{\mathfrak{sl}_2}(-1/2,1)\otimes V_{\mathfrak{sl}_2}(-1/2,E_{\overline{\lambda},-3/8})\rightarrow L_{\mathfrak{sl}_2}(-1/2,E_{\overline{\lambda+1},-3/8})\lbrace x\rbrace.
\end{equation*}
Showing that these descend to well-defined intertwining operators among three irreducible $L_{\mathfrak{sl}_2}(-\frac{1}{2},0)$-modules would again require information about singular vectors.
\end{exam}

\section*{Acknowledgements}

I would like to thank Jinwei Yang for introducing me to the problem of constructing intertwining operators among generalized Verma modules. I would also like to thank Thomas Creutzig, Simon Wood, and the referees for comments, corrections, and suggestions.

\end{document}